\numberwithin{equation}{section}
\newtheorem{theorem}{Theorem}[section]
\newtheorem{lemma}[theorem]{Lemma}
\newtheorem{remark}[theorem]{Remark}
\newtheorem{proposition}[theorem]{Proposition}
\numberwithin{equation}{section}
\newcommand{\na}{\nabla}
\newcommand{\dd}{\mathrm{d}}
\newcommand{\R}{\mathbb{R}}
\newcommand{\Us}{\mathbf{U}_s}
\newcommand{\U}{\mathbf{U}}
\newcommand{\W}{\mathbf{W}}
\newcommand{\UE}{U^\varepsilon}
\newcommand{\VE}{V^\varepsilon}
\newcommand{\e}{\varepsilon}
\newcommand{\T}{\mathcal{T}}
\newcommand{\X}{\mathbb{X}}
\newcommand{\Y}{\mathbb{Y}}
\newcommand{\Z}{\mathbb{Z}}
\newcommand{\z}{\langle}
\newcommand{\y}{\rangle}
\newcommand{\p}{\partial}
\newcommand{\se}{\sqrt\e}
\newcommand{\RE}{\mathbf{R}}
\newcommand{\F}{\mathbf{F}}
\newcommand{\ub}{\bar{u}}
\newcommand{\vb}{\bar{v}}
\newcommand{\ed}{\eta_\delta}
\newcommand{\ot}{\tilde{\omega}}
\newcommand{\de}{\delta}
\newcommand{\td}{\tilde}
\newcommand{\n}{\ensuremath{\nonumber}}
\newcommand{\yy}{\rangle_{L^2_{x}(y=0)}}
\newcommand{\fsx}{\|_{L^2_y(x=x_0)}}
\newcommand{\fs}{\|_{L^2_{x,y}}}
\newcommand{\jfxy}{\int_0^{x_0}\int_0^\infty}
\newtheorem{Lemma}{Lemma}[section]
\begin{document}

\title[Remarks on the Steady Prandtl Boundary Layer Expansions]
{Remarks on the Steady Prandtl Boundary Layer Expansions}

\author{Chen Gao}
\address{Beijing International Center for Mathematical Research, Peking University, Beijing 100871, China}
\email{gaochen@amss.ac.cn}

\author{Liqun Zhang}
\address{Hua Loo-Keng Key Laboratory of Mathematics, Institute of Mathematics, AMSS, and School of
Mathematical Sciences, UCAS, Beijing 100190, China 
123}
\email{lqzhang@math.ac.cn}

\begin{abstract}
We continue the study of the validity of the Prandtl boundary layer expansions in \cite{GZ}, where by estimating the stream-function of the remainder, we proved if the Euler flow is perturbation of shear flows when the width of domain is small.  In this paper, we obtain a new
derivatives estimate of stream-function away from the boundary layer and then prove the validity
of expansions for any non-shear Euler flow, provided the width of domain is small.
\end{abstract}
\date{}
\maketitle

\section{Introduction}\label{sec:intro}
We consider the stationary incompressible Navier-Stokes equations
\begin{equation}\label{NSE}
\left\{
\begin{aligned}
&U^\e U^\e_X+V^\e U^\e_Y-\e\Delta U^\e+P^\e_X=0,\\
&U^\e V^\e_X+V^\e V^\e_Y-\e\Delta V^\e+P^\e_Y=0,\\
&U^\e_X+V^\e_Y=0,
\end{aligned}
\right.
\end{equation}
posed in a two dimensional domain $\Omega=\{(X,Y):0< X< L, Y>0\}.$ The no-slip boundary conditions are set on the boundary $Y=0$:
$$U^\e(X,0)=0, \hspace{3mm}V^\e(X,0)=0.$$
We are concerned with the asymptotic behavior of solution $[U^\e,V^\e]$ when $\e$ is small. A formal limit $\e\rightarrow 0^+$ should lead to the Euler flow $[U^0,V^0]$ inside $\Omega$:
\begin{equation}\label{Euler}
\left\{
\begin{aligned}
&U^0 U^0_X+V^0 U^0_Y+P^0_X=0,\\
&U^0 V^0_X+V^0 V^0_Y+P^0_Y=0,\\
&U^0_X+V^0_Y=0.
\end{aligned}
\right.
\end{equation}
Naturally, we pose the system (\ref{Euler}) with no penetration boundary condition on $Y=0$:
$$V^0(X,0)=0.$$
Generically, there is a mismatch between the tangential velocities of the Euler flow $U^{0}(X,0)\neq0$ and the prescribed Navier-Stokes flow $\UE(X,0) = 0$ on the boundary, because of the difference of boundary conditions imposed on the two systems.

According to the classical Prandtl boundary layer theory, there exists a thin layer which connect with $\UE(X,0)$ and $U^\e(X,0)$. Precisely, we take the Prandtl's variables: 
\begin{align}\label{var change}
x = X, \qquad y=\frac{Y}{\se}.
\end{align}
In these variables, we express the solution of the NS equations $[\UE,\VE]$ via $[u^\e,v^\e]$ as
\[
\begin{split}
[\UE(X,Y),\VE(X,Y)]=[u^\e(x,y),\se v^\e(x,y)]
\end{split}
\]
in which we note that the scaled normal velocity $v^\e$ is $\frac{1}{\se}$ of the original velocity $\VE$ to satisfy the divergence-free condition. Similarly, $P^\e(X,Y) = p^\e(x,y)$. In these new variables, the Navier-Stokes equations (\ref{NSE}) now read
\begin{equation}\label{P-NSE}
\left\{
\begin{aligned}
u^\e u^\e_x+v^\e u^\e_y+p^\e_x&=u^\e_{yy}+\e u^\e_{xx},\\
\e[u^\e v^\e_x+v^\e v^\e_y]+p^\e_y&=\e[v^\e_{yy}+\e v^\e_{xx}],\\
u^\e_x+v^\e_y&=0.
\end{aligned}
\right.
\end{equation}
Let $\e\rightarrow0$, it leads to the Prandtl equations:
\begin{equation}\label{Prandtl}
\left\{
\begin{aligned}
&u^0_pu^0_{px}+v^0_pu^0_{py}-u^0_{pyy}+p^0_{px}=0,\\
&p^0_{py}=0,\\
&u^0_{px}+v^0_{py}=0,\\
\end{aligned}
\right.
\end{equation}
with $u^0_p|_{y=0}=0$. Prandtl hypothesized that when viscosity $\e$ is small the Navier-Stokes flow can be approximately decomposed into two parts:
\begin{align}\label{p-exp}
\begin{aligned}
&\UE(X,Y)\approx u^0_e(X,Y)-u^0_e(X,0)+u^0_p(X,\frac{Y}{\se}),\\
&\VE(X,Y)\approx v^0_e(X,Y)+\se v^0_p(X,\frac{Y}{\se}),
\end{aligned}
\end{align}
in which $[u^0_e,v^0_e]=[U^0,V^0]$ denotes the Euler flow. 

The verification of the viscosity vanishing limits is a challenging problem in general. There are lots of studies in recent year. For non-stationary case, the problem in the analytic case was proved in \cite{Sam1}, \cite{Sam2} and \cite{WWZ17}. In 2014, Maekawa \cite{Mae} proved the convergence under the assumption on the initial vorticity vanishing in the neighbourhood of boundary. Fei, Tao and Zhang \cite{TaoTao} generalized this result to 3D case by energy methods. In \cite{GMM18}, Gerard-Varet, Maekawa and Masmoudi established the Gevrey stability for Prandtl type shear flows. Chen, Wu and Zhang gave a new proof of Gevrey stability for steady profile by resolvent estimate method in \cite{CWZ}. Later, Gerard-Varet, Maekawa and Masmoudi \cite{GMM2020} showed the Prandtl expansion around concave boundary layer in Gevrey space. There are some results of instablity in Sobolev space, cf. \cite{GGN16}-\cite{GN18}.
 
For the steady case, the problem is considered in Sobolev space. In this situation, the Euler flow $[u^0_e,v^0_e]$ in expansion (\ref{p-exp}) is always shear flow $[u^0_e(Y),0]$.  An important progress was made by Guo, Nguyen \cite{GN} for Prandtl boundary layer expansions for the steady Navier-Stokes flows over a moving plate. Then Iyer \cite{I17} extended this result into a rotating disk. They considered the Euler flow is shear or rotating shear, Prandtl profile is strictly positive and the width of region or the angle of sector is small. Later, Iyer \cite{I} generalized the result in \cite{GN} for the perturbation of shear flow. After that, Iyer in \cite{IPU1}, \cite{IPU2} and \cite{IPU3} justified the steady Prandtl expansion over a moving plane in $(0,\infty)\times(0,\infty)$ under the assumption of constant shear Euler flow and smallness of the boundary layer profile. In 2018, a significant work by Guo and Iyer \cite{GI} showed the convergence result for no-slip boundary conditions in shear Euler flows in the case the width of the region $L$ is small. Inspired by the methods in this work, we introduced a new quantity, the quotient of steam-function and the approximate solution, in \cite{GZ}. By estimating that quantity, we justified the validity of expansion (\ref{p-exp}) for the perturbation of shear Euler flow when $L$ is small. Moreover, we showed when the Euler flow is shear and Prandtl profile is in monotonic class, (\ref{p-exp}) is right even $L$ is large. Recently, Iyer and Masmoudi in \cite{IM} also estimated the quantity introduced in \cite{GZ} to show the stability of the Prandtl expansion in domain $(0,\infty)\times(0,\infty)$. In that work, the Euler flow is considered as special shear flow $[1,0]$ and the solution of Prandtl equation is famous Blasius flow in monotonic class. There are also the stability results for Prandtl type shear flow of Navier-Stokes equations with force term in $X$-periodic domain cf. \cite{Gerard pra} and \cite{CWZ2021}.

In this paper, we continue our earlier work in \cite{GZ}. We assume that the outside Euler flow $[u^0_e(X,Y),v^0_e(X,Y)]$ satisfying the following hypothesis:
\begin{align}\label{Eul profile}
\begin{aligned}
&0<c_0\leqslant u^0_e\leqslant C_0<\infty,\\
&\|\z Y\y^k\nabla^m [u^0_e,v^0_e]\|_{L^\infty}<\infty \text{ for }m\geqslant1.
\end{aligned}
\end{align}
Here $\z Y\y=Y+1$ and $k$ is a large constant. The special case for Euler flow is shear flow $[u^0_e(Y),0]$ which discussed in \cite{GI} and \cite{GZ}.

We consider the Prandtl equations with the positive data.
\begin{equation}\label{pra them}
\left\{
\begin{aligned}
&u^0_pu^0_{px}+v_p^0u^0_{py}-u^0_{pyy}+p^0_{px}=0,\hspace{3 mm}p^0_{py}=0,\hspace{3 mm}u_{px}^0+v_{py}^0=0,\hspace{3 mm}(x,y)\in(0,L)\times\mathbb{R}_+,\\
&u^0_p|_{x = 0} = U^0_P(y), \hspace{5 mm} u^0_p|_{y = 0} = v^0_p|_{y = 0} = 0, \hspace{5 mm} u^0_p|_{y \uparrow \infty} = u^0_e|_{Y = 0}.
\end{aligned}
\right.
\end{equation}
$U^0_P$ is a prescribing smooth function such that
\begin{align}
\begin{aligned} \label{pos}
& U^0_P > 0 \text{ for } y > 0, \hspace{3 mm} \partial_y U^0_P(0) > 0, \hspace{3 mm} \partial_y^2 U^0_P-u^0_e(x,0)u^0_{ex}(x,0) \sim y^2 \text{ near } y = 0,\\
&\partial_y^m \{U^0_P - u^0_e(x,0)\} \text { decay fast for any }m\geqslant0.
\end{aligned}
\end{align}
By the classical result in \cite{Oleinik}, under above conditions on $U^0_P$, if $L$ is small enough, equations (\ref{pra them}) admit a classical solution $[u^0_p,v^0_p]$ satisfying:
\begin{equation}\label{Pra profile}
\begin{aligned}
&u^0_p > 0 \text{ for } y > 0, \hspace{3mm} u^0_{py}|_{y=0}>0,\\
&\nabla^m \{u^0_p - u^0_e(0)\} \text { decay fast as $y\rightarrow\infty$ for any }m\geqslant0.
\end{aligned}
\end{equation}

Now we state our main result.
\begin{theorem}\label{main}  Assume the Euler flow $[u^0_e,v^0_e]$ satisfies (\ref{Eul profile}), the Prandtl profile satisfies (\ref{Pra profile}), $L$ is a constant small enough,
\noindent then there exist $\e_0(L)>0$ depending on $L$, such that for $0<\e\leqslant\e_0$, equations (\ref{NSE}) admits a solution $[U^\e,V^\e]\in W^{2,2}(\Omega)$, satisfying:
\begin{align}
\begin{aligned}
&\|U^\e-u^0_e+u^0_e|_{Y=0}-u^0_p\|_{L^\infty}\leqslant C\se,\\
&\hspace{1cm}\|V^\e-v^0_e\|_{L^\infty}\leqslant C\se,
\end{aligned}
\end{align}
with the following boundary conditions:
\begin{equation}\label{Boundary C}
\begin{aligned}
&[U^\e, V^\e]|_{Y=0}=0,\\
&[U^\e, V^\e]|_{X=0}=[u^0_e(0,Y)-u^0_e(0,0)+u^0_p(0,\frac{Y}{\se})+\se a_0, v^0_e(0,Y)+\se b_0],\\
&[U^\e, V^\e]_{X=L}=[u^0_e(L,Y)-u^0_e(L,0)+u^0_p(L,\frac{Y}{\se})+\se a_L, v^0_e(L,Y)+\se b_L].
\end{aligned}
\end{equation}
Here $C$ is a constant independent of $L$ and $\e$,
\begin{equation*}
\begin{aligned}
&a_0(Y)=u^1_e(0,Y)+u^1_b(0,\frac{Y}{\se})+\se u^2_e(0,Y)+\se \hat{u}^2_b(0,\frac{Y}{\se}),\\
&a_L(Y)=u^1_e(L,Y)+u^1_b(L,\frac{Y}{\se})+\se u^2_e(L,Y)+\se \hat{u}^2_b(L,\frac{Y}{\se}),\\
&b_0(Y)=v^0_b(0,\frac{Y}{\se})+v^1_e(0,Y)+\se v^1_b(0,\frac{Y}{\se})+\se v^2_e(0,Y)+\e \hat{v}^2_b(0,\frac{Y}{\se}),\\
&b_L(Y)= v^0_b(L,\frac{Y}{\se})+v^1_e(L,Y)+\se v^1_b(L,\frac{Y}{\se})+\se v^2_e(L,Y)+\e \hat{v}^2_b(L,\frac{Y}{\se}),\\
\end{aligned}
\end{equation*}
are smooth functions constructed in Proposition \ref{construct}.
\end{theorem}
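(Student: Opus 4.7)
The plan is to construct the Navier--Stokes solution as a multi-scale approximate profile plus a small remainder, obtain an $\e$-independent a priori estimate for the remainder via a stream-function quotient method adapted to a non-shear Euler background, and then close by a fixed-point argument.

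First, I fix the approximate solution $[U^\e_s, V^\e_s]$ built in Proposition \ref{construct}, of the schematic form $U^\e_s = u^0_e - u^0_e|_{Y=0} + u^0_p + \se(u^1_e + u^1_b) + \e(u^2_e + \hat{u}^2_b)$ and similarly for $V^\e_s$. At each order, the Euler correctors $[u^i_e, v^i_e]$ solve linearized Euler equations around $[u^0_e, v^0_e]$ (rather than a trivial transport along a shear profile) and the Prandtl-type correctors $[u^i_b, v^i_b]$ solve linearized Prandtl equations around $[u^0_p, v^0_p]$, with boundary data chosen so that the residual obtained by substituting $[U^\e_s, V^\e_s]$ into \eqref{NSE} is $O(\e^{3/2 + \sigma})$ in appropriate weighted norms for some $\sigma > 0$.

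Setting $[U^\e, V^\e] = [U^\e_s, V^\e_s] + \e^\gamma [u, v]$ for an appropriate $\gamma > 1/2$ and letting $\psi$ denote the stream function of $[u, v]$, the vorticity form of the remainder equation reads schematically
\begin{equation*}
-\e \Delta^2 \psi + U^\e_s \p_X \Delta \psi + V^\e_s \p_Y \Delta \psi - \Delta U^\e_s \, \p_X \psi - \Delta V^\e_s \, \p_Y \psi = R^\e + N^\e(\psi),
\end{equation*}
with $R^\e$ the residual and $N^\e$ the nonlinearity. Following \cite{GZ}, I introduce the quotient $q = \psi / U^\e_s$, which is well defined because $U^\e_s \gtrsim \min(1, y)$ by the positivity hypotheses on $u^0_e$ and $u^0_p$. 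Deriving the equation for $q$ and performing a weighted energy estimate converts the singular transport term $U^\e_s \p_X \Delta \psi$ into a coercive form, as in \cite{GZ}, and gives the correct boundary-layer control of $\psi$ once $L$ is taken small.

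The main obstacle, highlighted in the abstract, is the transport term $V^\e_s \p_Y \Delta \psi$, which in the shear setting of \cite{GI} and \cite{GZ} contributes only $O(\se)$ but here acquires an $O(1)$ piece from $v^0_e$. When passed through the quotient substitution it produces a commutator of the form $(\p_Y v^0_e)\,\p_Y^2 \psi$ that cannot be absorbed into the boundary-layer energy alone. My plan to overcome this is to derive a new interior derivative estimate for $\psi$ on the region $\{Y \geq \delta\}$ away from the boundary layer: there, modulo an $O(\e)$ perturbation, the linearized operator reduces to the linearized Euler operator around the smooth, non-degenerate flow $[u^0_e, v^0_e]$, and standard linear elliptic/hyperbolic estimates combined with a Poincar\'e inequality in the narrow strip (valid because $L$ is small) give control of the required higher $Y$-derivatives of $\psi$ with constants independent of $\e$. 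A smooth cutoff at $Y = \delta$ then patches this interior bound to the quotient estimate of \cite{GZ} inside the boundary layer and closes the a priori bound. Once the a priori bound is in hand, a standard contraction mapping argument applied to the remainder equation yields the solution $[U^\e, V^\e]$ for $\e \leqslant \e_0(L)$, and the claimed $L^\infty$ bounds follow by Sobolev embedding from the energy and interior estimates of $\psi$.
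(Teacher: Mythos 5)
Your overall plan aligns with the paper's architecture: build the multi-scale approximate solution $[U_s,V_s]$, pass to the stream function $\Phi$ of the remainder and to the quotient $G=\Phi/U_s$, observe that the non-shear $V_s$-transport is the new obstruction, and close with a contraction mapping. However, the crucial technical step is precisely the content you wave away with ``standard linear elliptic/hyperbolic estimates combined with a Poincar\'e inequality,'' and as stated this does not give what is needed. In the outer region $\{Y\geq\delta\}$ the remainder vorticity equation (\ref{Phi-Euq}) is still a singularly perturbed fourth-order problem, with the coefficient $\e$ in front of $\Delta^2$, and there is no off-the-shelf elliptic or hyperbolic estimate that furnishes $\e$-uniform control of ``higher $Y$-derivatives of $\psi$.'' Moreover, after the quotient substitution the problematic cross term that survives from Lemma \ref{important1} is $\z V_sU_sG_X,G_Y\y$, and the quantity that must be controlled in the outer region is not a generic derivative of $\Phi$ but the specific convective combination
\begin{equation*}
\Phi_X+\frac{V_s}{U_s}\Phi_Y\;=\;-V+\frac{V_s}{U_s}U,
\end{equation*}
i.e.\ the derivative of $\Phi$ along the approximate Euler streamlines. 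Your proposal never identifies this combination, so it is not clear what your interior lemma would even assert.

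What the paper actually does at this point (Lemma \ref{non-shear}) is a tailored weighted energy estimate: it tests the equation (\ref{Phi-Euq}) against $-\frac{1}{U_s}\bigl[\Phi_X+Q_s\Phi_Y\bigr]\ot(X)\ed(Y)$ with $Q_s=V_s/U_s$, $\ot(X)=X(L-X)$, and $\ed$ the cutoff to $\{Y\geq\delta\}$. The convective part of the operator is rewritten as $U_s\Delta[\Phi_X+Q_s\Phi_Y]-U_s\,[2\nabla Q_s\cdot\nabla\Phi_Y+(\Delta Q_s)\Phi_Y]$, and the leading term produces, after integration by parts and one commutation with $\ot_X=L-2X$, the coercive quantity $\|[\Phi_X+Q_s\Phi_Y]\sqrt{\ed}\|^2$; the commutator terms and the bi-Laplacian are $O\big((L+\se)(\|G\|_\X^2+\|G\|_\Y^2)\big)$ once $\delta$ is taken with $L^{1/2}+\e^{1/4}\leq\delta\ll1$ to control $\ed'$ and $\ed''$. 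This estimate is then combined with the observation $|V_s|\lesssim\delta U_s$ on $\{Y\leq 2\delta\}$ to absorb $\z V_sU_sG_X,G_Y\y$. None of this is ``standard,'' and it is not a matter of putting a cutoff at $Y=\delta$ and invoking a Poincar\'e inequality; it is a specific, quantitative positivity estimate for the quantity $\Phi_X+Q_s\Phi_Y$ that your proposal would need to formulate and prove.
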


Unlike the assumptions of shear flows or their perturbation in previous works, this theorem shows the expansions (\ref{p-exp}) for any non-shear Euler flow which $u^0_e$ is strictly positive. This result and the second result in \cite{GZ} coincide with the classical results of Oleinik and Samokhin \cite{Oleinik} for solutions of Prandtl's equation. They show the local well-posedness ($L$ is small) of Prandtl equation for any $u^0_e(X,0)$ is strictly positive and the global well-posedness ($L$ is any constant) for $u^0_{eX}(X,0)\geqslant 0$. And we show the Prandtl expansions for non-shear Euler flow when $L\ll1$ in this paper and shear Euler flow when $L$ is any given constant in \cite{GZ}. 

To prove the theorem, we first construct the approximate solutions $\U_s=[U_s,V_s]$ of Navier-Stokes equations, which is similar to \cite{GZ}. The main difficulty is estimating the remainders $\U:=\U^\e-\U_s$, where $\U$ satisfies the following linearized Navier-Stokes equations:
\begin{align}
-\e\Delta \U+\Us\cdot\nabla\U+\U\cdot\nabla\Us+\nabla P=\F.
\end{align}
But when Euler flows is non-shear, $V_s\approx v^0_e$ is not small. We notice that $v^0_e(X,0)=0$, so $v^0_e$ is small near the boundary $\{(X,Y)|Y=0\}$. It leads us to estimating the stream-function away from the boundary layers. We estimate the derivatives of stream-function in  outer area, i.e. $\{Y\geqslant\delta>0\}$, and combine this estimate with some estimates we obtained in \cite{GZ}, to show the stream-function can be dominating by $\F$, which essentially leads to the proof the theorem. 

This paper is organized as follows: In Section 2, we show the main profile of the approximation solution. In Section 3, we estimate the stream-function of remainder. In Section 4, we prove the main theorem. The construction of the high-order approximation solutions is in Appendix.

\section{Construction of the approximate solution}
The construct of the approximate solutions is similar to \cite{GZ}. We will need higher order expansions, as compared to (\ref{p-exp}), in order to control the remainder. Actually, the approximate solutions of the Navier-Stokes equations are as the following form:
\begin{equation}\label{proflie}
\begin{aligned}
U^\e(X,Y)\approx&u^0_e(X,Y)+u^0_b(X,\frac{Y}{\se})+\se[u^1_e(X,Y)+u^1_b(X,\frac{Y}{\se})]\\
         &+\e[u^2_e(X,Y)+u^2_b(X,\frac{Y}{\se})],\\
V^\e(X,Y)\approx&v^0_e(X,Y)+\se[v^0_b(X,\frac{Y}{\se})+v^1_e(X,Y)]+\e[v^1_b(X,\frac{Y}{\se})+v^2_e(X,Y)]\\
         &+\e^\frac{3}{2}v^2_b(X,\frac{Y}{\se}),\\
P^\e(X,Y)\approx&p^0_e(X,Y)+p^0_b(X,\frac{Y}{\se})+\se[p^1_e(X,Y)+p^1_b(X,\frac{Y}{\se})]\\
         &+\e[p^2_e(X,Y)+p^2_b(X,\frac{Y}{\se})]+\e^\frac{3}{2}p^3_b(X,\frac{Y}{\se}),
\end{aligned}
\end{equation}
in which $[u^j_e,v^j_e,p^j_e]$ and $[u^j_b,v^j_b,p^j_b]$, with $j=0,1,2,$ denoting the Euler profiles and boundary layer profiles, respectively. Here, we note that these profile solutions also depend on $\e$. And the Euler flows are always evaluated at $(X,Y)$, whereas the boundary layer profiles are at $(X,\frac{Y}{\se})$.

{\bf Notation.}
For convenience, we will introduce some notation here. we write $$\z\cdot,\cdot\y=\z\cdot,\cdot\y_{L^2_{X,Y}}, $$ $$\z\cdot,\cdot\y_{Y=0}=\z\cdot,\cdot\y_{L^2_X(Y=0)},$$ $$\|\cdot\|=\|\cdot\|_{L^2_{X,Y}}$$ and $$\|\cdot\|_{\infty}=\|\cdot\|_{L^\infty_{X,Y}}=\|\cdot\|_{L^\infty_{x,y}}.$$ We denote $a\lesssim b$ which means there exist a positive constant $C_0$, s.t. $a\leqslant C_0b$, here $C_0$ is independent on $\se$ and $L$. And we write $a=O\big(b\big)$ as $|a|\lesssim b$.

\subsection{The leading order of approximate solution}
Recall the Euler flow $[u^0_e,v^0_e]$. Let $\psi$ be the stream-function of $[u^0_e,v^0_e]$
\begin{align}
\psi(X,Y):=\int_0^Yu^0_e(X,Y')\dd Y', \hspace{3 mm} \psi_Y=u^0_e, \hspace{3 mm} \psi_X=-v^0_e,
\end{align}
then Euler equations (\ref{Euler}) are equivalent to:
\begin{align}\label{F-Euler}
\Delta\psi=F_e(\psi).
\end{align}
From the assumptions in (\ref{Eul profile}), we can know that $F_e$ together with sufficiently many derivatives are bounded and decaying in its argument.

For Prandtl equations, there is a famous result due to Oleinik \cite{Oleinik}:
\begin{proposition}[Oleinik]   \label{Oleinik} Assume boundary data  $U^0_P \in C^\infty$ satisfies (\ref{pos}), then for some $L > 0$, equations (\ref{pra them}) exists a solution $[u^0_p, v^0_p]$, satisfying, for some $y_0, m_0 > 0$,
\begin{equation}\label{OL}
\begin{aligned}
&\sup_{(0,L)\times(0,\infty)} |u^0_p, \p_y u^0_p, \p_{yy}u^0_p, \p_x u^0_p| \lesssim 1, \\
&\inf_{(0,L)\times (0, y_0)} \p_y u^0_p > m_0 > 0,\\
&\hspace{3mm} u^0_p > 0,\hspace{1mm}\text{ for } y > 0.
\end{aligned}
\end{equation}
\end{proposition}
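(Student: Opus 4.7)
The classical strategy due to Oleinik is to convert the degenerate Prandtl system into a non-degenerate quasilinear parabolic equation via the Crocco transformation. My plan is to introduce new independent variables $\xi = x$, $\eta = u^0_p/u^0_e(x,0)$ and the new dependent variable $w(\xi,\eta) = \p_y u^0_p$; since the hypothesis $\p_y U^0_P(0) > 0$ together with $U^0_P > 0$ for $y > 0$ makes $y \mapsto u^0_p(x,y)$ strictly increasing for small $x$, this change of variables is locally invertible. Direct computation turns (\ref{pra them}) into a quasilinear equation of schematic form
\[
w\, w_\xi - w^2\, w_{\eta\eta} + \big(\text{drift in } \eta\big)\, w_\eta = 0
\]
on the strip $(0,L) \times (0,1)$, supplemented with $w|_{\eta = 1} = 0$ (from $u^0_p \to u^0_e$ at infinity) and a Robin-type condition at $\eta = 0$ obtained by evaluating the Prandtl equation on $y = 0$ and using $v^0_p|_{y=0} = 0$.

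Next I would build a local-in-$\xi$ solution $w$ by the standard regularization-and-compactness procedure: replace $w^2$ by $w^2 + \delta$ to obtain a uniformly parabolic problem, solve it by Schauder theory, derive uniform (in $\delta$) $C^{2+\alpha}$ bounds via weighted energy and maximum-principle arguments, and pass to the limit $\delta \to 0$. The compatibility condition $\p_y^2 U^0_P - u^0_e(x,0)\,u^0_{ex}(x,0) \sim y^2$ near $y = 0$ is exactly what is needed so that the initial datum $w(0,\eta)$ is smooth up to $\eta = 0$ and matches the Robin boundary condition there, which makes the linearized auxiliary problems well-posed in classical H\"older spaces.

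The bounds (\ref{OL}) then follow from the maximum principle once $w$ is in hand: uniform $L^\infty$ control of $w$ and its $\eta$-derivatives gives $\p_y u^0_p,\p_{yy} u^0_p \in L^\infty$, differentiation in $\xi$ yields $u^0_{px} \in L^\infty$, and the bound on $u^0_p$ itself is immediate from $0 \leq \eta \leq 1$ together with the boundedness of $u^0_e$. Strict positivity $u^0_p > 0$ for $y > 0$ corresponds to $0 < \eta < 1$ in Crocco variables, and the monotonicity $\p_y u^0_p > m_0$ on a strip $(0,L) \times (0,y_0)$ amounts to a lower bound $w \geq m_0$ on $\{0 \leq \eta \leq \eta_0\}$, which is propagated from $\p_y U^0_P(0) > 0$ by a Hopf/Harnack-type argument combined with the smallness of $L$.

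The main obstacle is the double degeneracy of the Crocco equation at $\eta = 0$ and $\eta = 1$, where the diffusion coefficient $w^2$ vanishes and the equation ceases to be uniformly parabolic; this is precisely what forces $L$ to be small, because $L$ must be chosen so that the lower-order drift and the initial lower bound on $w$ dominate the loss of coercivity as $w \to 0$. Once $w$ is constructed, $u^0_p$ is recovered by inverting $y \mapsto \eta$ via $y_\eta = 1/w$, then $v^0_p$ is obtained by integrating $v^0_{py} = -u^0_{px}$ starting from $v^0_p|_{y=0} = 0$, and the fast decay of $u^0_p - u^0_e(x,0)$ as $y \to \infty$ is read off from the behaviour of $w$ near $\eta = 1$ inherited from the decay of $U^0_P - u^0_e(0,0)$.
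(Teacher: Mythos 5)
The paper does not prove this proposition; it cites it directly from Oleinik--Samokhin \cite{Oleinik} and moves on. So the comparison is really between your sketch and Oleinik's classical argument. Your choice of the Crocco transformation is exactly the route Oleinik takes, and the overall scaffolding (transform to a strip $0<\eta<1$, regularize to restore uniform parabolicity, derive uniform estimates, pass to the limit, use the maximum principle for the positivity and lower bounds, and finally invert $y\mapsto\eta$ and integrate the divergence constraint to recover $v^0_p$) is correct.

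There is, however, a substantive error in your description of the transformed equation and where it degenerates. With $\xi=x$, $\eta=u^0_p/u^0_e(x,0)$ and $w=\partial_y u^0_p$, the coefficient of the transport term $w_\xi$ is $u^0_p=u^0_e(x,0)\,\eta$, not $w$; schematically the equation has the form
\[
u^0_e(x,0)\,\eta\, w_\xi \;-\; w^2\, w_{\eta\eta} \;+\; (\text{drift in }\eta)\,w_\eta \;+\; (\text{zeroth order}) \;=\; 0,
\]
not $w\,w_\xi - w^2 w_{\eta\eta} + \cdots$. Consequently your claim that the ``double degeneracy at $\eta=0$ and $\eta=1$'' comes from the diffusion coefficient $w^2$ vanishing at both ends is wrong. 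At $\eta=1$ (the far field) we indeed have $w\to0$ and the $\eta$-diffusion degenerates. But at $\eta=0$ (the wall) the hypothesis $\partial_y U^0_P(0)>0$ and its propagation give $w$ bounded \emph{below}, so $w^2$ is non-degenerate there; the degeneracy at $\eta=0$ is in the \emph{transport} direction, because $\eta\to0$ kills the coefficient of $w_\xi$. This distinction matters: the a priori estimate near $\eta=0$ relies on an oblique (Neumann-type, not really Robin) boundary condition of the form $w\,w_\eta|_{\eta=0}=u^0_e u^0_{ex}$ obtained by restricting the Prandtl equation to $y=0$, and the barrier/Hopf arguments there exploit the uniform ellipticity in $\eta$ that you would have thrown away. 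Getting the structure right is essential before you can make the regularization-and-compactness step, and the lower bound $\inf_{y<y_0}\partial_y u^0_p>m_0$, go through.

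One more small caution: the compatibility condition in (\ref{pos}) is used to make the initial slice $w(0,\eta)$ match the oblique boundary condition at the corner $(\xi,\eta)=(0,0)$, as you say; but the sign coming from $p^0_{px}=-u^0_e(x,0)u^0_{ex}(x,0)$ should be tracked carefully when you actually write that compatibility relation out, since it is easy to drop a minus sign and then the Schauder machinery at the corner will not close.
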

In fact, if $U^0_P$ satisfies high order parabolic compatibility conditions at the corner $(0,0)$, then $[u^0_p,v^0_p]$ are smooth enough. The compatibility conditions are discussed in our previous works \cite{GZ}. Following the proof of Oleinik in \cite{Oleinik}, we have:
\begin{lemma}\label{pra0}
If $U^0_P \in C^\infty$ satisfies (\ref{pos}) and high order parabolic compatibility conditions, then
\begin{align}
\|\z y\y^M\nabla^k(u^0_p(x,y)-u^0_e(x,0))\|_{\infty}\lesssim1, \hspace{2mm}\text{  for  }\hspace{2mm} 0\leqslant k \leqslant K,
\end{align}
here $K$ and $M$ are constants.
\end{lemma}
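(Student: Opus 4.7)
Set $\td u(x,y) := u^0_p(x,y) - u^0_e(x,0)$. Subtracting the Bernoulli relation $p^0_{px}=-u^0_e(x,0) u^0_{ex}(x,0)$ (which holds because $u^0_p\to u^0_e(x,0)$ as $y\to\infty$ in the first line of (\ref{pra them})) from the Prandtl equation shows that $\td u$ satisfies the linear equation, with $x$ playing the role of time,
\[
u^0_p\, \td u_x + v^0_p\, \td u_y - \td u_{yy} + u^0_{ex}(x,0)\, \td u = 0,
\]
with initial datum $\td u(0,y) = U^0_P(y)-u^0_e(0,0)$, Dirichlet condition $\td u(x,0) = -u^0_e(x,0)$ and $\td u(x,\infty)=0$. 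By (\ref{pos}) the initial datum and all its $y$-derivatives decay faster than any polynomial; by Proposition \ref{Oleinik} the coefficients $u^0_p,\ u^0_{ex}(x,0)$ are bounded on $[0,L]$, while $v^0_p=-\int_0^y u^0_{px}\,\dd y'$ grows at most linearly in $y$.

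For the case $k=0$, I would run a weighted $L^2$ energy estimate: after subtracting a smooth $x$-dependent extension of the boundary value to render the problem homogeneous at $y=0$, test the evolution against $\z y\y^{2M}\td u$ and integrate on $(0,\infty)$. Integrating the transport term by parts yields $-\tfrac12\int(v^0_p \z y\y^{2M})_y\, \td u^2\,\dd y$; since $(v^0_p)_y=-u^0_{px}$ is bounded and $v^0_p=O(\z y\y)$, this is controlled by $\int \td u^2 \z y\y^{2M}\,\dd y$. The dissipation $-\td u_{yy}\cdot\z y\y^{2M}\td u$ integrates by parts into a non-negative principal piece plus a lower-order correction of the same type, and the zeroth-order term $u^0_{ex}(x,0)\td u^2\z y\y^{2M}$ is trivially bounded. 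A Gronwall argument in $x$ then produces $\|\z y\y^M \td u(x,\cdot)\|_{L^2_y}\lesssim 1$ on $[0,L]$.

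For $k\geqslant 1$, I would differentiate the equation to obtain linear parabolic equations for $\p_x^j \p_y^{k-j}\td u$ whose source terms are polynomials in lower-order derivatives of $u^0_p$, $v^0_p$ and $u^0_e(x,0)$. The compatibility conditions on $U^0_P$ (in the sense spelled out in \cite{GZ}) transfer the fast decay to all mixed derivatives of $\td u$ at the corner $(0,0)$; using the equation itself to trade each pair of $y$-derivatives for an $x$-derivative plus a lower-order product, induction on $k$ combined with the same weighted $L^2$ estimate propagates the decay up to order $K$. A 1D Sobolev embedding in $y$ then upgrades the weighted $L^2$ bounds to the weighted $L^\infty$ bound claimed.

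The main technical obstacle is the linear growth of $v^0_p$ in $y$: each differentiation of the equation generates transport terms whose weighted $L^2$ estimate only just closes because $v^0_p\cdot y\sim \z y\y^2$ must be absorbed by the weight $\z y\y^{2M}$. This is precisely why the statement commits to a fixed, large polynomial weight $M$ and a fixed number of derivatives $K\leqslant M/2$ (up to constants), rather than to arbitrary $M$ and $K$ simultaneously; the combinatorial bookkeeping of the source terms at each induction step — especially the nested integrals recovering $v^0_p$ from $u^0_{px}$ — is tedious but not deep once enough slack is built into the weight.
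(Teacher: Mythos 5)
The paper's own ``proof'' of Lemma \ref{pra0} is essentially a citation: it states that smoothness at the corner follows from compatibility conditions discussed in \cite{GZ}, and that the weighted decay estimate follows ``by following the proof of Oleinik in \cite{Oleinik}'' (von Mises change of variables plus maximum-principle and barrier constructions). Your plan instead builds the estimate from scratch via a direct weighted $L^2$ energy argument on the linearization, so this is a genuinely different route rather than a reconstruction of the paper's argument. Your algebra is correct: subtracting the Bernoulli relation $p^0_{px}=-u^0_e(x,0)u^0_{ex}(x,0)$ and writing $\td u=u^0_p-u^0_e(x,0)$ does yield $u^0_p\td u_x+v^0_p\td u_y-\td u_{yy}+u^0_{ex}(x,0)\td u=0$, the growth $v^0_p=O(\z y\y)$ and $v^0_{py}=-u^0_{px}$ bounded are right, and $(v^0_p\z y\y^{2M})_y\lesssim \z y\y^{2M}$ so the transport term closes. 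This energy route is in the spirit of what \cite{GI-H} and \cite{GZ} do for related linearized Prandtl problems, and it buys you a self-contained argument that avoids the von Mises transformation, at the cost of more bookkeeping.

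Two points you should flag or tighten. First, testing against $\z y\y^{2M}\td u$ produces the energy $\int u^0_p\z y\y^{2M}\td u^2\,\dd y$, which degenerates as $y\to0$ since $u^0_p\sim y$ there; the claimed $L^\infty$ bound near $y=0$ does not come from the energy alone but from combining the dissipation $\int\z y\y^{2M}\td u_y^2$ with the homogenized Dirichlet condition (Poincar\'e: $\int_0^1\td u^2\lesssim\int_0^1\td u_y^2$) and local parabolic regularity; this should be said explicitly, since otherwise the Gronwall step looks as if it controls a weaker, degenerate norm. Second, your heuristic that the linear growth of $v^0_p$ forces $K\lesssim M/2$ is not quite the mechanism: at the base level the transport term costs nothing after integration by parts because the extra power of $y$ in $v^0_p$ is canceled by the derivative of the weight. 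The genuine source of the $K$--$M$ trade-off is rather the higher-order source terms in the induction (e.g.\ $v^0_{px}\sim\z y\y$ multiplying a lower-order derivative of $\td u$, which cannot be integrated by parts against the top-order factor) together with the loss of one $L^2_y$ derivative in the Sobolev embedding; worth rephrasing so the argument doesn't look like it is appealing to a cancellation that isn't there.
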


After we solved Prandtl's equation (\ref{pra them}), we set $$u^0_b(x,y)=u^0_p(x,y)-u^0_e(x,0),$$ $$v^0_b(x,y)=\int_y^\infty u^0_{bx}(x,y')\dd y'$$ and $p^0_b=0.$ The construction of high-order approximate solutions $[u^1_e,v^1_e]$, $[u^2_e,v^2_e]$, $[u^1_b,v^1_b]$ and $[\hat{u}^2_b,\hat{v}^2_b]$ is in Appendix. We denote the $[U_s,V_s]$ as the following:
\begin{equation}
\begin{aligned}
U_s(X,Y)=&u^0_e(X,Y)+u^0_b(X,\frac{Y}{\se})+\se[u^1_e(X,Y)+u^1_b(X,\frac{Y}{\se})]\\
          &+\e[u^2_e(X,Y)+\hat{u}^2_b(X,\frac{Y}{\se})],\\
V_s(X,Y)=&v^0_e(X,Y)+\se[v^0_b(X,\frac{Y}{\se})+v^1_e(X,Y)]+\e[v^1_b(X,\frac{Y}{\se})+v^2_e(X,Y)]\\
          &+\e^\frac{3}{2}\hat{v}^2_b(X,\frac{Y}{\se}),\\
P_s(X,Y)=&p^0_e(X,Y)+p^0_b(X,\frac{Y}{\se})+\se[p^1_e(X,Y)+p^1_b(X,\frac{Y}{\se})]\\
          &+\e[p^2_e(X,Y)+p^2_b(X,\frac{Y}{\se})]+\e^\frac{3}{2}p^3_b(X,\frac{Y}{\se}).
\end{aligned}
\end{equation}
Then the errors
\begin{equation}\label{R1,R2}
\begin{aligned}
&R_1:=U_sU_{sX}+V_sU_{sY}-\e\Delta U_s+P_{sX},\\
&R_2:=U_sV_{sX}+V_sV_{sY}-\e\Delta V_s+P_{sY},
\end{aligned}
\end{equation}
satisfy
\begin{equation}
\begin{aligned}\label{R order}
\|R_1\|+\|R_2\|\lesssim\e^{\frac{3}{2}}.
\end{aligned}
\end{equation}

Next we show the main profile of $[U_s,V_s]$.

\subsection{The main profile of approximate solution}
In order to obtain the estimates of the remainder, we need to know more information about $[U_s,V_s]$.
\[
\begin{split}
&U_s(X,Y)= u^0_e(X,Y)+u^0_b(X,\frac{Y}{\se})+O\big(\se\big),\\
&V_s(X,Y)= v^0_e(X,Y)+\se (v^0_b(X,\frac{Y}{\se})+v^1_e(X,Y))+O\big(\e\big).
\end{split}
\]
Since $u^0_e(X,Y)$ is strictly positive, $u^0_p(X,\frac{Y}{\se})>0$ for $Y>0$, $u^0_p(X,0)=0$ and $u^0_{py}(X,0)>0$, when $\frac{Y}{\se}\leqslant1$,
\[
\begin{split}
u^0_e(X,Y)+u^0_b(X,\frac{Y}{\se})=u^0_e(X,Y)-u^0_e(X,0)+u^0_p(X,\frac{Y}{\se})\gtrsim-Y+\frac{Y}{\se}\gtrsim\frac{Y}{\se},
\end{split}
\]
when $1\leqslant\frac{Y}{\se}\leqslant \e^{-\frac{1}{4}}$,
\[
\begin{split}
u^0_e(X,Y)+u^0_b(X,\frac{Y}{\se})=u^0_e(X,Y)-u^0_e(X,0)+u^0_p(X,\frac{Y}{\se})\gtrsim-Y+1\gtrsim1,
\end{split}
\]
when $\frac{Y}{\se}\gtrsim \e^{-\frac{1}{4}}$,
\[
\begin{split}
u^0_e(X,Y)+u^0_b(X,\frac{Y}{\se})\gtrsim1.
\end{split}
\]
So $U_s\sim\frac{Y}{\se},$ when $Y\leqslant\se,$ and $U_s\sim 1,$ when $Y\geqslant\se.$

One can easily see for $i,j\geqslant0$,
\begin{align}\label{main pro}
\begin{aligned}
&\|\p^j_XU_s\|_\infty\lesssim1,\hspace{2mm}\|\p^j_XV_s\|_\infty\lesssim1,\\
&\se^i\|Y^j\p^{i+j}_YU_s\|_\infty\lesssim\se^i\|Y^j\p^{i+j}_Yu^0_e\|_\infty+\se^i\|\frac{y^j}{\se^i}\p^{i+j}_yu^0_b\|_\infty+O\big(\se\big)\lesssim1,\\
&\se^i\|Y^j\p^{i+j+1}_YV_s\|_\infty\lesssim\se^i\|Y^j\p^{i+j+1}_Yu^0_e\|_\infty+\se^i\|\frac{y^j}{\se^i}\p^{i+j+1}_yu^0_b\|_\infty+O\big(\se\big)\lesssim1.
\end{aligned}
\end{align}

Let $\delta$ be a positive constant satisfying $\de\geqslant\e^\frac{1}{4}$, when $Y\leqslant\de$, we have 
\begin{align}
\begin{aligned}
|V_s|\lesssim \de U_s.
\end{aligned}
\end{align}
In fact, when $Y\leqslant\se$,
\[
\begin{split}
|v^0_e(X,Y)|\lesssim Y=\se \frac{Y}{\se}\lesssim \se U_s\lesssim \de U_s,
\end{split}
\]
when $\se\leqslant Y\leqslant\de$,
\[
\begin{split}
|v^0_e(X,Y)|\lesssim Y\lesssim Y U_s\lesssim \de U_s.
\end{split}
\]

When $Y\geqslant\delta$, for any $j\geqslant0$, 
\begin{align}\label{away b}
\begin{aligned}
&\p^j_YU_s(X,Y)= \p^j_Yu^0_e(X,Y)+O\big(\se\big),\\
&\p^j_YV_s(X,Y)= \p^j_Yv^0_e(X,Y)+O\big(\se\big).
\end{aligned}
\end{align}
In fact since $u^j_b(X,y), v^j_b(X,y)$ decay fast when $y\rightarrow\infty$, for some large $M$,
\[
\begin{split}
 |u^j_b(X,\frac{Y}{\se})|\lesssim \big(\frac{Y}{\se}\big)^{-M}\lesssim \se^M\delta^{-M}\lesssim \e^\frac{M}{4},\\
 |v^j_b(X,\frac{Y}{\se})|\lesssim \big(\frac{Y}{\se}\big)^{-M}\lesssim \se^M\delta^{-M}\lesssim \e^\frac{M}{4}.
\end{split}
\]

\section{Estimates of the remainder}
In this section, we show the estimates of the remainder. Let
\begin{align}
\UE=U_s+U,\hspace{3 mm}\VE=V_s+V.
\end{align}
Then
\begin{equation}
\left\{
\begin{aligned}
&U_sU_X+U_{sX}U+V_sU_Y+U_{sY}V-\e\Delta U+P_X=-\{R_1+UU_X+VU_Y\},\\
&U_sV_X+V_{sX}U+V_sV_Y+V_{sY}V-\e\Delta V+P_Y=-\{R_2+UV_X+VV_Y\},\\
&U_X+V_Y=0.
\end{aligned}
\right.
\end{equation}
We consider the linearized equations:
\begin{equation}\label{LNSE}
\left\{
\begin{aligned}
&U_sU_X+U_{sX}U+V_sU_Y+U_{sY}V-\e\Delta U+P_X=F_1,\\
&U_sV_X+V_{sX}U+V_sV_Y+V_{sY}V-\e\Delta V+P_Y=F_2,\\
&U_X+V_Y=0.
\end{aligned}
\right.
\end{equation}
Our critical estimates is the following proposition.
\begin{proposition}\label{Prop}
Under the assumptions in theorem\ref{main}, if $$[U,V]\in W^{2,2}(\Omega)\cap W^{1,2}_0(\Omega)$$ satisfies the equations (\ref{LNSE}), \noindent then
\begin{align}
\|\se U_X,\se U_Y,\se V_X, \se V_Y, U, V\|\leqslant C(\|F_1\|+\|F_2\|).
\end{align}
\end{proposition}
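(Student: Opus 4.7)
The plan is to reduce the estimate to an $H^2$-type bound on the stream function $\psi$, combining the quotient estimate near the boundary (the mechanism of \cite{GZ}) with a new direct estimate on $\psi$ in the outer region $\{Y\geqslant\delta\}$ that exploits the regularity of the Euler coefficients away from the Prandtl layer.

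\emph{Reformulation and inner estimate.} I would introduce $\psi$ by $U=\psi_Y$, $V=-\psi_X$, $\psi|_{\p\Omega}=0$, and the vorticity $\omega=-\Delta\psi=V_X-U_Y$. Taking $\p_Y$ of the first equation of (\ref{LNSE}) minus $\p_X$ of the second eliminates the pressure and yields a vorticity-stream equation
\begin{equation*}
-\e\Delta\omega+U_s\omega_X+V_s\omega_Y+\mathcal{L}(U,V,\nabla U,\nabla V)=\p_YF_1-\p_XF_2,
\end{equation*}
where $\mathcal{L}$ collects lower-order linear terms with coefficients built from $\nabla U_s,\nabla V_s$. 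The naive energy estimate testing (\ref{LNSE}) against $(U,V)$ reduces by the divergence-free structure to controlling a single bad term $\int U_{sY}VU$; using $\|YU_{sY}\|_\infty\lesssim1$ from (\ref{main pro}) and the Hardy bound $\|V/Y\|\leqslant\|V_Y\|$ this is dominated by $\|V_Y\|\|U\|$, and closes provided we can control $\|\nabla\psi\|=\|(U,V)\|$ independently. To this end, fix $\delta\geqslant\e^{1/4}$ and localize to $\{Y\leqslant\delta\}$; Section 2 gives $|V_s|\lesssim\delta U_s$, so the convective structure is essentially shear. Testing the vorticity-stream equation against the quotient $q=\psi/U_s$ cut off in $Y$, the principal convective term collapses,
\begin{equation*}
\int U_s\omega_X\,q=\int\omega_X\psi=\int\omega V,
\end{equation*}
while $\int V_s\omega_Y q$ is absorbed using the bound $|V_s|\lesssim\delta U_s$; this is exactly the mechanism in \cite{GZ} and yields
\begin{equation*}
\e\|\nabla^2\psi\|^2_{L^2(Y\leqslant\delta)}+\|\nabla\psi\|^2_{L^2(Y\leqslant\delta)}\lesssim\|F_1\|^2+\|F_2\|^2+\mathcal{B}_\delta,
\end{equation*}
where $\mathcal{B}_\delta$ is a cutoff remainder supported on $\{\delta/2\leqslant Y\leqslant\delta\}$, a strip on which $U_s$ is bounded away from zero.

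\emph{Outer estimate.} On $\{Y\geqslant\delta/2\}$, (\ref{away b}) gives $U_s=u^0_e+O(\se)$ and $V_s=v^0_e+O(\se)$, so the boundary-layer singularity is absent and the operator becomes an elliptic perturbation of the linearized Euler operator with smooth, non-degenerate coefficients. With a cutoff $\eta$ supported in $\{Y\geqslant\delta/4\}$, I would test the vorticity-stream equation against $\eta^2\psi$ (to extract $\|\nabla\psi\|_{L^2}$ in that region) and against $\eta^2\omega$ (to extract $\se\|\nabla\omega\|_{L^2}$). Using the uniform positivity $u^0_e\geqslant c_0>0$ together with Poincar\'e in $X$ over the short interval $(0,L)$ to dominate the non-sign-definite parts of $U_s\omega_X$ and $V_s\omega_Y$ by a factor of $L\ll1$, one obtains
\begin{equation*}
\e\|\nabla^2\psi\|^2_{L^2(Y\geqslant\delta/2)}+\|\nabla\psi\|^2_{L^2(Y\geqslant\delta/2)}\lesssim\|F_1\|^2+\|F_2\|^2+\mathcal{B}'_\delta,
\end{equation*}
with $\mathcal{B}'_\delta$ again supported on the overlap strip.

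\emph{Combination and main obstacle.} Adding the inner and outer estimates, the overlap remainders $\mathcal{B}_\delta$ and $\mathcal{B}'_\delta$ live on a strip where $U_s$ is non-degenerate and of order one, so they can be absorbed by standard interior interpolation into the global left-hand side. This produces $\e\|\nabla^2\psi\|^2+\|\nabla\psi\|^2\lesssim\|F_1\|^2+\|F_2\|^2$, which, feeding back into the naive energy identity, closes the missing $\int U_{sY}VU$ term and yields the claimed bound through $U=\psi_Y$, $V=-\psi_X$. The main obstacle is the outer estimate itself: once $V_s$ is genuinely $O(1)$ it cannot be treated as a perturbation inside the quotient structure used in \cite{GZ}, so one has to work directly with $\psi$ and rely on the smallness of $L$ together with $u^0_e\geqslant c_0$ to tame the full non-symmetric convection $U_s\p_X+V_s\p_Y$. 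A secondary delicate point is matching the boundary contributions on the overlap strip $\{\delta/4\leqslant Y\leqslant\delta\}$ without losing negative powers of $\delta$, which is what forces the choice $\delta\sim\e^{1/4}$ and makes the threshold $L\ll1$ necessary.
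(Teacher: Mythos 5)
Your diagnosis of the obstruction is exactly right: the quotient mechanism of~\cite{GZ} survives near the boundary because $|V_s|\lesssim\delta U_s$ there, and the genuinely new difficulty is that $V_s$ is $O(1)$ away from the layer, which forces a direct argument on $\Phi$ in the outer region using $u^0_e\geqslant c_0$ and $L\ll1$. That is indeed what the paper's new Lemma~\ref{non-shear} supplies. But the architecture you propose is different from, and weaker than, what the paper actually does, in two places.

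First, the paper does \emph{not} split the elliptic (quotient) estimates into inner and outer pieces. Lemmas~\ref{routine} and~\ref{important1} are \emph{global}: they test the $G$-equation against $-G$ and $-G\omega$ (with $\omega=L-X$ a weight in $X$, not a cutoff in $Y$) and produce the full-space terms $\z U_s^2G_X,G_X\y$, $\z U_s^2G_Y,G_Y\y$, $\|G\|_\Y^2$ with good signs, together with the single problematic cross term $\z V_sU_sG_X,G_Y\y$. The $Y$-localization enters only when estimating this one cross term: its near-boundary part is $O(\delta)$ because $|V_s|\lesssim\delta U_s$, and its far part is handled by Lemma~\ref{non-shear}. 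Your proposal instead cuts the quotient estimate at $Y\leqslant\delta$ and then has to match an outer estimate on $Y\geqslant\delta/2$. The cutoff in $Y$ hits second and third derivatives of $\Phi$ with $\delta^{-1},\delta^{-2}$ factors on the overlap strip, and the claim that these are ``absorbed by standard interior interpolation'' is not justified: you would need to show these localization errors are dominated by the left-hand side uniformly as $\e\to0$, which is precisely the kind of bookkeeping the paper avoids by keeping the quotient estimates global.

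Second, the outer test functions you propose --- $\eta^2\psi$ and $\eta^2\omega$ --- do not reproduce the structure the paper needs. The convective part is $U_s\Delta\Phi_X+V_s\Delta\Phi_Y=U_s\Delta(\Phi_X+Q_s\Phi_Y)+\text{comm.}$ with $Q_s=V_s/U_s$, and the paper's multiplier is the convection-adapted $-\frac{1}{U_s}(\Phi_X+Q_s\Phi_Y)\,\ot\,\ed$ with $\ot=X(L-X)$. After integrating by parts, the factor $\ot_{XX}=-2$ produces the \emph{unweighted} positive term $\|(\Phi_X+Q_s\Phi_Y)\sqrt{\ed}\|^2$, which is exactly what is fed back in the proof of Proposition~\ref{Prop} to cancel the outer piece of $\z V_sU_sG_X,G_Y\y$. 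Testing against $\eta^2\omega$ only gives $\e\|\nabla\omega\|^2$-type control, which is too weak when $\e\to0$, and testing against $\eta^2\psi$ does not produce a coercive square from the convection $U_s\p_X+V_s\p_Y$ because that vector field is not symmetric with respect to $\psi$. Without a multiplier of the form $\Phi_X+Q_s\Phi_Y$ (times a weight that degenerates at $X=0,L$) you will not extract the crucial lower-order positive contribution, and the outer estimate will not close.

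In short: you have the right heuristic (small $L$, positivity of $u^0_e$, regularity of Euler coefficients away from the layer), but the specific mechanism --- the convection-adapted multiplier $\frac{1}{U_s}(\Phi_X+Q_s\Phi_Y)\ot\ed$ exploited inside an otherwise \emph{global} quotient estimate --- is the missing ingredient, and the inner/outer patching you propose both weakens the estimate and introduces cutoff errors you have not controlled.
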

Let $\Phi$ be the stream-function of $U,V$, that is, $\Phi_X=-V, \Phi_Y=U$, We can solve the stream-function by this way 
\begin{align}
\Phi(X,Y)=\int_0^Y U(X,Y')\dd Y'.
\end{align}
According to the boundary conditions $[U,V]_{\Omega}=0$ and $U_X+V_Y=0$, We have 
\begin{align}
\begin{aligned}
&\Phi|_{X=0}=\Phi|_{X=L}=\Phi|_{Y=0}=0,\\
&\Phi_X|_{X=0}=\Phi_X|_{X=L}=\Phi_Y|_{Y=0}=0.
\end{aligned}
\end{align}
If $U,V\in L^2(\Omega)$, then $\Phi_Y,\Phi_X\in L^2(\Omega)$.

Because $[U,V]$ satisfy the equations (\ref{LNSE}), We can deduce the equation of stream function
\begin{equation}\label{Phi-Euq}
\left\{
\begin{aligned}
&U_s\Delta\Phi_X-\Phi_X\Delta U_s-\e\Delta^2\Phi+V_s\Delta\Phi_Y-\Phi_Y\Delta V_s=\partial_Y F_1-\partial_X F_2,\\
&\Phi|_{X=0}=\Phi|_{X=L}=\Phi|_{Y=0}=\Phi_X|_{X=0}=\Phi_X|_{X=L}=\Phi_Y|_{Y=0}=0.
\end{aligned}
\right.
\end{equation}
It is the fourth-order elliptic equation for $\Phi$, the boundary conditions are about $\Phi$ and its derivatives.

Let $G=\frac{\Phi}{U_s}$, $G$ and $\Phi$ satisfy
\begin{equation}\label{G-Euq}
\left\{
\begin{aligned}
&\partial_{XX}[U_s^2G_X]+\partial_{XY}[U_s^2G_Y]-\e\Delta^2\Phi+R[\Phi]=\partial_Y F_1-\partial_X F_2,\\
&G|_{X=0}=G|_{X=L}=G|_{Y=0}=G_X|_{X=0}=G_X|_{X=L}=0.
\end{aligned}
\right.
\end{equation}
where $R[\Phi]=V_s\Delta\Phi_Y-U_{sX}\Delta\Phi-\Phi_Y\Delta V_s+\Phi\Delta U_{sX}$.
We define two norms of $G$:
\begin{equation}\label{space XY}
\begin{aligned}
\|G\|_\X^2&:=\|U_sG_X\|^2+\|U_sG_Y\|^2,\\
\|G\|_\Y^2&:=\e\{\|\sqrt U_sG_{XX}\|^2+2\|\sqrt U_sG_{XY}\|^2+\|\sqrt U_sG_{YY}\|^2\}.
\end{aligned}
\end{equation}

We start with the following Hardy-type's inequality in \cite{GZ}.
\begin{Lemma}\label{Hardy}
If $H\in W^{1,2}(0,\infty)$, $0<\xi\leqslant1$, \noindent then
$$ \|H\|_{L^2_Y}^2\leqslant C\xi\e\|\sqrt {U_s}H_Y\|_{L^2_Y}^2+\frac{C}{\xi^2}\|U_sH\|_{L^2_Y}^2.$$
\end{Lemma}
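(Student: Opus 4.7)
The plan is to exploit the two-scale structure of $U_s$ recorded in Section 2, namely $U_s\sim Y/\se$ for $Y\leqslant\se$ and $U_s\sim 1$ for $Y\geqslant\se$, together with the parameter $\xi\in(0,1]$ which balances the two norms on the right. On the far region $Y\geqslant\se$ one has $U_s\geqslant c>0$, so
$$\int_{\se}^\infty H^2\,\dd Y\leqslant C\int_{\se}^\infty U_s^2H^2\,\dd Y\leqslant C\|U_sH\|_{L^2_Y}^2,$$
which is already subsumed by the second term on the right since $\xi\leqslant 1$.

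To treat the boundary region $0<Y<\se$, I would introduce the weight $\phi(Y):=\min(Y,\se)$ and integrate the identity $\p_Y(\phi H^2)=\phi'H^2+2\phi H H_Y$ over $(0,\infty)$. Both boundary contributions vanish: at $Y=0$ because $\phi(0)=0$, and at $Y=\infty$ because $H\in W^{1,2}(0,\infty)$ embeds in $C_0([0,\infty))$. Since $\phi'=\mathbf{1}_{\{Y<\se\}}$, this yields
$$\int_0^{\se}H^2\,\dd Y=-2\int_0^\infty \phi H H_Y\,\dd Y.$$

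The key geometric fact is $\phi(Y)\lesssim \se\, U_s(Y)$ on all of $(0,\infty)$ (both sides are comparable to $Y$ for $Y\leqslant\se$, while $\phi=\se$ and $U_s\gtrsim 1$ for $Y\geqslant\se$). Substituting this and applying Young's inequality with parameter $\alpha$ gives
$$\Big|\int\phi H H_Y\,\dd Y\Big|\lesssim \se\int U_s|H||H_Y|\,\dd Y \leqslant \tfrac{\alpha\se}{2}\|\sqrt{U_s}H_Y\|_{L^2_Y}^2+\tfrac{\se}{2\alpha}\int U_sH^2\,\dd Y.$$
To convert the $U_s$ weight in the second piece into the desired $U_s^2$ weight, I would apply Young a second time, $U_sH^2=(U_sH)\cdot H\leqslant\tfrac{\beta}{2}U_s^2H^2+\tfrac{1}{2\beta}H^2$, and combine with the far-region estimate above. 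Choosing $\alpha=\xi\se$ and $\beta=1/\xi$ (so that $\se/(\alpha\beta)=1$) makes the residual $\|H\|_{L^2_Y}^2$ contribution appear with coefficient $\tfrac12$, which absorbs on the left; the gradient term then inherits the coefficient $\xi\e$ and the $\|U_sH\|_{L^2_Y}^2$ term inherits $C/\xi^2$, matching the statement.

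The main obstacle is the degeneracy of $U_s$ at $Y=0$: a naive Hardy or fundamental-theorem-of-calculus approach produces the factor $\int_Y^{Y_0}\dd t/U_s\sim \se\log(\se/Y)$, which diverges as $Y\to 0$ and spoils any attempted Cauchy--Schwarz directly on $H_Y$. The remedy is the exact matching $\phi=\min(Y,\se)\sim \se U_s$, which forces the degeneracy created by integration by parts to cancel precisely against the weight $\sqrt{U_s}$ attached to the gradient, producing the clean $\xi\e$ and $\xi^{-2}$ powers in the final estimate.
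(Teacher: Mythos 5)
Your proof is correct in substance and matches the statement's target powers, but it takes a route slightly different from the paper's. The paper uses a smooth cutoff scaled at $\xi\se$, writing $\int H^2 = \int H^2\chi^2(\tfrac{Y}{\xi\se}) + \int H^2(1-\chi(\tfrac{Y}{\xi\se}))^2$; the $\xi$-dependence is then built into the cutoff itself, so that after one integration by parts with weight $Y$ the bound $Y^2\chi^2(\tfrac{Y}{\xi\se}) \lesssim \xi\e\,U_s$ (valid because $Y \leqslant 2\xi\se$ on the support) and $1-\chi(\tfrac{Y}{\xi\se}) \lesssim U_s/\xi$ produce the two desired terms directly, with only a single Young step to trade $YHH_Y$ for $Y^2H_Y^2$ plus an absorbable fraction of $\|H\|^2$. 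You instead use the fixed weight $\phi=\min(Y,\se)$, which gives the identity $\int_0^{\se}H^2 = -2\int\phi HH_Y$ with no $\xi$ anywhere, and then introduce $\xi$ by a second Young inequality. Both arguments rest on the same geometric fact $\min(Y,\se)\sim\se\,U_s$; your version is a bit more hands-on while the paper's scaling is arguably cleaner since the parameter tuning happens once, in the cutoff.

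One small slip to repair: the residual $\|H\|^2$ coefficient you obtain is not $\tfrac12$ but $\tfrac{C_0}{2}$, where $C_0$ is the implicit constant in $\phi\lesssim\se\,U_s$ and in the far-field comparison $U_s\gtrsim 1$. With $\beta=1/\xi$ you get $\se/(\alpha\beta)=1$, but the inequality chain has picked up a factor $C_0$, so the coefficient in front of $\|H\|^2$ is $C_0/2$, which need not be below $1$. The fix is immediate: take $\beta = 2C_0/\xi$ (or equivalently adjust $\alpha$). The gradient term then carries $C_0\xi\e$ and the $\|U_sH\|^2$ term carries $C_0^2/\xi^2$, which is of the form $C/\xi^2$; absorption of $\|H\|^2$ now works with margin, and the final estimate has the stated shape. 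This is a constant-tracking matter only and does not affect the architecture of your argument.
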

\begin{proof}
Let $\chi:[0,\infty)\rightarrow [0,1]$ be a smooth cut-off function supported in $[0,2]$, and $\chi|_{[0,1]}=1$,
$$\int_0^\infty H^2\dd Y\lesssim\int_0^\infty H^2\chi^2(\frac{Y}{\xi\se})\dd Y+ \int_0^\infty H^2(1-\chi(\frac{Y}{\xi\se}))^2\dd Y.$$
Recall the leading profile of $U_s$, 
\begin{align*}
U_s\sim\left\{
\begin{aligned}
&\frac{Y}{\se},\hspace{2mm} if \hspace{1mm} Y\leqslant\se, \\ 
&1, \hspace{2mm} if \hspace{1mm} Y\geqslant\se.
\end{aligned}
\right.
\end{align*}
So when $\frac{Y}{\se}\leqslant1$, $1-\chi(\frac{Y}{\xi\se})\lesssim\frac{Y}{\xi\se}\lesssim\frac{U_s}{\xi}$, and when $\frac{Y}{\se}\geqslant1$, $1-\chi(\frac{Y}{\xi\se})\lesssim1\lesssim\frac{U_s}{\xi}$. We have
$$\int_0^\infty H^2(1-\chi(\frac{Y}{\xi\se}))^2\dd Y\lesssim\frac{1}{\xi^2}\int_0^\infty U_s^2H^2\dd Y.$$
And
\[
\begin{aligned}
\int_0^\infty H^2\chi^2(\frac{Y}{\xi\se})\dd Y&=-2\int_0^\infty YHH_Y\chi^2(\frac{Y}{\xi\se})\dd Y-2\int_0^\infty\frac{Y}{\xi\se}H^2\chi'(\frac{Y}{\xi\se})\chi(\frac{Y}{\xi\se})\dd Y\\
                               &\lesssim\int_0^\infty Y^2H_Y^2\chi^2(\frac{Y}{\xi\se})\dd Y+\int_0^\infty({\frac{Y}{\xi\se}})^2H^2|\chi'(\frac{Y}{\xi\se})|\chi(\frac{Y}{\xi\se})\dd Y\\
                               &\lesssim\xi\e\int_0^\infty U_sH_Y^2\dd Y+\frac{1}{\xi^2}\int_0^\infty U_s^2H^2\dd Y.
\end{aligned}
\]
The proof is complete.
\end{proof}

The next lemma is a basic elliptic estimate in \cite{GZ}, it is true even though Euler flows is non-shear.
\begin{Lemma}\label{routine}
Let $G$ be the solution of equation (\ref{G-Euq}), $L>0$, then
\begin{align}\label{rou}
\|G\|_\Y^2\lesssim\|G\|_\X^2+|\z\partial_YF_1-\partial_XF_2,G\y|.
\end{align}
\end{Lemma}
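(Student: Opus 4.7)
The plan is to test equation (3.6) against $G$ in $L^2(\Omega)$ and extract the coercive norm $\|G\|_\Y^2$ from the biharmonic term, treating everything else by integration by parts. The Dirichlet-type boundary data $G|_{Y=0}=G|_{X=0,L}=G_X|_{X=0,L}=0$, together with $U_s|_{Y=0}=0$ and decay at $Y=\infty$ inherited from $W^{2,2}$-regularity, makes every boundary term produced in these integrations by parts either vanish or, in the case of the single surviving term on $Y=0$ arising from the biharmonic, have the helpful sign $+2\varepsilon\int_{Y=0} U_{sY}(X,0)\,G_Y(X,0)^2\,dX$, which is nonnegative because $u^0_{py}|_{y=0}>0$.

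For the biharmonic coercivity, two integrations by parts reduce $-\varepsilon\langle\Delta^2\Phi,G\rangle$ to $-\varepsilon\langle\Delta\Phi,\Delta G\rangle$ plus the favorable boundary term. Expanding $\Delta\Phi = U_s\Delta G + 2\nabla U_s\cdot\nabla G + \Delta U_s\,G$ isolates the leading piece $-\varepsilon\int U_s(\Delta G)^2$. The pointwise identity
\[
(\Delta G)^2 = |\nabla^2 G|^2 - 2(G_{XX}G_{YY} - G_{XY}^2),
\]
combined with the null-Lagrangian factorisation $G_{XX}G_{YY}-G_{XY}^2 = \partial_X(G_X G_{YY}) - \partial_Y(G_X G_{XY})$ and one further IBP against $U_s$ (boundary terms drop because $U_s|_{Y=0}=0$ and $G_X|_{X=0,L}=0$), converts the leading piece into $-\|G\|_\Y^2 + \mathcal{R}$, where $\mathcal{R}$ collects $\nabla U_s$-commutators. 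These commutators, together with the expansion residues $-2\varepsilon\langle\nabla U_s\cdot\nabla G,\Delta G\rangle - \varepsilon\langle\Delta U_s\,G,\Delta G\rangle$, are handled by Cauchy--Schwarz and Lemma 3.1 applied to $H = G_X$ or $G_Y$ with a small parameter $\xi$; this converts them into $\eta\|G\|_\Y^2 + C\|G\|_\X^2$ for $\eta$ as small as desired.

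The remaining left-hand pieces $\langle\partial_{XX}[U_s^2 G_X],G\rangle$ and $\langle\partial_{XY}[U_s^2 G_Y],G\rangle$ reduce after two IBPs to integrals such as $-\int U_s U_{sX}G_X^2$ and $-\int U_s U_{sX}G_Y^2$, directly bounded by $\|G\|_\X^2$ via the profile bounds (2.9). For $\langle R[\Phi],G\rangle$ I would shift derivatives off $\Phi$, use the $L^\infty$ bounds on $V_s,\nabla U_s,\nabla V_s$ from (2.9), and once more absorb surviving second-order derivatives of $G$ into $\eta\|G\|_\Y^2$ via Lemma 3.1. Combining everything and choosing $\eta$ small then yields (3.7).

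The main obstacle is the commutator bookkeeping in the biharmonic step: the null-Lagrangian integration by parts produces unweighted norms of $\nabla G$ that are not directly dominated by $\|G\|_\Y$ (since $U_s$ degenerates at $Y=0$), and it is precisely the weighted Hardy inequality of Lemma 3.1 — which trades such unweighted first derivatives for $\xi\|G\|_\Y^2 + \xi^{-2}\|G\|_\X^2$ — that enables the absorption and closes the estimate.
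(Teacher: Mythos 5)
Your overall strategy coincides with the paper's: test the fourth-order equation for $\Phi$ against $-G$, extract the coercive $\|G\|_\Y^2$ and the favorable boundary term $\varepsilon\langle U_{sY}G_Y,G_Y\rangle|_{Y=0}$ from the biharmonic, reduce the principal-part terms $\langle\partial_{XX}[U_s^2G_X],G\rangle$ and $\langle\partial_{XY}[U_s^2G_Y],G\rangle$ to $O(\|\nabla G\|^2)$, do the same for $R[\Phi]$, and then close with the weighted Hardy inequality (Lemma~\ref{Hardy}) applied to $G_X$ and $G_Y$. The only substantive difference is cosmetic: you route the biharmonic through $\varepsilon\langle\Delta\Phi,\Delta G\rangle$ and the Hessian-determinant identity, whereas the paper expands $\Phi_{XX},\,\Phi_{XY},\,\Phi_{YY}$ one at a time after the intermediate integrations by parts. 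Both routes produce the same commutator cloud, so this is a re-packaging rather than a genuinely different proof.

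Two points to tighten. First, the pointwise identity should read
$(\Delta G)^2 = |\nabla^2G|^2 + 2\bigl(G_{XX}G_{YY}-G_{XY}^2\bigr)$, not with a minus sign; the rest of your computation then carries the correct sign on the null-Lagrangian piece. Second, your claim that the residues $-2\varepsilon\langle\nabla U_s\cdot\nabla G,\Delta G\rangle - \varepsilon\langle\Delta U_s\,G,\Delta G\rangle$ and the commutators from the null-Lagrangian step are dispatched by \emph{Cauchy--Schwarz and Lemma~\ref{Hardy} alone} is too optimistic. After the null-Lagrangian integration by parts (and likewise in the residue terms) you are still left with quantities such as $\varepsilon\int U_{sX}G_{XY}G_Y$ and $\varepsilon\int U_{sY}G_Y G_{YY}$ in which an \emph{unweighted second derivative} of $G$ appears; Cauchy--Schwarz gives $\varepsilon\|G_{YY}\|^2$ or $\varepsilon\|G_{XY}\|^2$, which are \emph{not} dominated by $\|G\|_\Y^2$ because $U_s$ vanishes like $Y/\sqrt{\varepsilon}$ at the wall, and Lemma~\ref{Hardy} only trades \emph{first}-order unweighted derivatives for weighted ones. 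You need one more integration by parts (exactly as the paper does), producing a square-first-derivative weighted by a second derivative of $U_s$ — these are then $O(\|\nabla G\|^2)$ via the profile bounds~(\ref{main pro}), and the boundary contributions vanish because $G_X|_{Y=0}=G_Y|_{X=0,L}=0$ or carry the helpful sign again. Once those extra IBPs are written in, the argument is correct and is essentially the paper's proof.
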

\begin{proof}
Take the inner product of $(\ref{G-Euq})_1$ and $-G$.

First term
\begin{align}\label{a1}
\begin{aligned}
\z\partial_{XX}[U_s^2G_X],-G\y=&\z\partial_X[U_s^2G_X],G_X\y=\z U_sU_{sX}G_X,G_X\y\\
                                      =&O\big(\|G_X\|^2\big).
\end{aligned}
\end{align}
Second term
\begin{align}\label{a2}
\begin{aligned}
\z\partial_{XY}[U_s^2G_Y],-G\y=&\z\partial_X[U_s^2G_Y],G_Y\y=\z U_sU_{sX}G_Y,G_Y\y\\
                              =&O\big(\|G_Y\|^2\big).
\end{aligned}
\end{align}

Bi-Laplacian term is
\[
\z-\e\Delta^2\Phi,-G\omega\y=\e\z\Phi_{XXXX}+2\Phi_{XXYY}+\Phi_{YYYY},G\omega\y.
\]
\begin{align}\label{a3.1}
\begin{aligned}
\e\z\Phi_{XXXX},G\y=&-\e\z\Phi_{XXX},G_X\y=\e\z\Phi_{XX},G_{XX}\y\\
                         =&\e\z U_sG_{XX}+2U_{sX}G_X+U_{sXX}G,G_{XX}\y\\
                         =&\e\z U_sG_{XX},G_{XX}\y-\e\z2U_{sXX}G_X+U_{sXXX}G,G_X\y,\\
                         =&\e\z U_sG_{XX},G_{XX}\y+O\big(\e\|G_X\|^2\big).
\end{aligned}
\end{align}
Next
\begin{align}\label{a3.2}
\begin{aligned}
\z-2\e\Phi_{XXYY},-G\y=&-\z2\e\Phi_{XXY},G_Y\y=\z2\e\Phi_{XY},G_{XY}\y\\
                      =&2\e\z U_sG_{XY}+U_{sX}G_Y+U_{sY}G_X+U_{sXY}G,G_{XY}\y,\\
                      =&2\e\z U_sG_{XY},G_{XY}\y-\e\z U_{sXX}G_Y,G_Y\y-\e\z U_{sYY}G_X,G_X\y\\
      &-2\e\z U_{sXY}G_Y,G_X\y-2\e\z U_{sXYY}G,G_X\y\\
      =&2\e\z U_sG_{XY},G_{XY}\y+O\big(\e\|G_Y\|^2+\e\|U_{sYY}\|_\infty\|G_X\|^2+\e\|U_{sXYY}\|_\infty\|G_X\|\|G\|\big)\\
=&2\e\z U_sG_{XY},G_{XY}\y+O\big(\|G_X\|^2+\|G_Y\|^2\big).
\end{aligned}
\end{align}
\begin{align}\label{a3.3}
\begin{aligned}
\z-\e\Phi_{YYYY},-G\y=&-\e\z\Phi_{YYY},G_Y\y=\e\z\Phi_{YY},G_Y\y|_{Y=0}+\e\z\Phi_{YY},G_{YY}\y\\
                     =&\e\z U_sG_{YY}+2U_{sY}G_Y+U_{sYY}G,G_Y\y|_{Y=0}\\
                      &+\e\z U_sG_{YY}+2U_{sY}G_Y+U_{sYY}G,G_{YY}\y\\
                     =&2\e\z U_{sY}G_Y,G_Y\y|_{Y=0}+\e\z U_sG_{YY},G_{YY}\y+\e\z2U_{sY}G_Y+U_{sYY}G,G_{YY}\y\\
                     =&\e\z U_{sY}G_Y,G_Y\y|_{Y=0}+\e\z U_sG_{YY},G_{YY}\y-\e\z2U_{sYY}G_Y+U_{sYYY}G,G_Y\y\\
   =&\e\z U_{sY}G_Y,G_Y\y|_{Y=0}+\e\z U_sG_{YY},G_{YY}\y\\
    &+O\big(\e\|U_{sYY}\|_\infty\|G_Y\|^2+\e\|YU_{sYYY}\|_\infty\|\frac{G}{Y}\|\|G_Y\|\big)\\
=&\e\z U_{sY}G_Y,G_Y\y|_{Y=0}+\e\z U_sG_{YY},G_{YY}\y+O\big(\|G_Y\|^2\big),
\end{aligned}
\end{align}
where we use the Hardy inequality $\|\frac{G}{Y}\|\lesssim\|G_Y\|$ and (\ref{main pro}). Since $U_{sY}|_{Y=0}=u^0_{eY}(X,0)+\frac{1}{\se}u^0_{py}(X,0)>0$, the first two terms are positive above. According to (\ref{main pro}),
\[
\begin{split}
 &\|V_{sY}\|_\infty\lesssim\|\p_Y(v^0_e+\se v^0_b)\|_\infty+\se\lesssim\|v^0_{eY}+v^0_{by}\|_{\infty}+\se\lesssim 1,\\
 &\|V_sU_{sY}\|_\infty\lesssim\|v^0_eU_{sY}\|_\infty+1\lesssim\|\frac{v^0_e}{Y}\|_\infty\|YU_{sY}\|_\infty+1\lesssim1,\\
  &\|\Phi_X\|=\|U_sG_X+U_{sX}G\|\lesssim\|G_X\|,\\
&\|\Phi_Y\|=\|U_sG_Y+U_{sY}G\|\lesssim\|G_Y\|+\|U_{sY}Y\|_{\infty}\|\frac{G}{Y}\|\lesssim\|G_Y\|.
\end{split}
\]
The $R[\Phi]$ term can be estimated as
\begin{align}\label{a4}
\begin{aligned}
\z V_s\Phi_{XXY},-G\y=&\z V_s\Phi_{XY},G_X\y+\z V_{sX}\Phi_{XY},G\y\\
                     =&\z V_s(U_sG_{XY}+U_{sX}G_Y+U_{sY}G_X+U_{sXY}G),G_X\y\\
                      &-\z V_{sX}\Phi_X,G_Y\y-\z V_{sXY}\Phi_X,G\y\\
                     =&-\frac{1}{2}\z(V_sU_s)_YG_X,G_X\y+\z V_s(U_{sX}G_Y+U_{sY}G_X+U_{sXY}G),G_X\y\\
                      &-\z V_{sX}\Phi_X,G_Y\y-\z V_{sXY}\Phi_X,G\y\\
                      =&O\big(\|V_sU_{sY}\|_\infty\|G_X\|^2+\|V_sU_{sXY}\|_{\infty}\|G\|\|G_X\|+\|G_X\|^2+\|G_Y\|^2\big)\\
                     =&O\big(\|G_X\|^2+\|G_Y\|^2\big).
\end{aligned}
\end{align}
\begin{align}\label{a5}
\begin{aligned}
\z V_s\Phi_{YYY},-G\y=&\z V_s\Phi_{YY},G_Y\y+\z V_{sY}\Phi_{YY},G\y\\
                     =&\z V_s(U_sG_{YY}+2U_{sY}G_Y+U_{sYY}G),G_Y\y\\
                       &-\z V_{sY}\Phi_Y,G_Y\y-\z V_{sYY}\Phi_Y,G\y\\
                     =&-\frac{1}{2}\z (V_sU_s)_YG_Y,G_Y\y+\z V_s(2U_{sY}G_Y+U_{sYY}G),G_Y\y\\
                      &-\z V_{sY}\Phi_Y,G_Y\omega\y-\z V_{sYY}\Phi_Y,G\y\\
                      =&O\big(\|G_Y\|^2+\|\frac{V_s}{Y}\|_\infty\|Y^2U_{sYY}\|_\infty\|\frac{G}{Y}\|\|G_Y\|+\|YV_{sYY}\|_\infty\|\frac{G}{Y}\|\|\Phi_Y\|\big)\\
                     =&O\big(\|G_Y\|^2\big).
\end{aligned}
\end{align}
\begin{align}\label{a6}
\begin{aligned}
\z -U_{sX}\Delta\Phi,-G\y=&-\z U_{sX}\Phi_X,G_X\y-\z U_{sXX}\Phi_X,G\y\\
               &-\z U_{sX}\Phi_Y,G_Y\y-\z U_{sXY}\Phi_Y,G\y\\
                           =&O\big(\|G_X\|^2+\|G_Y\|^2\big).
\end{aligned}
\end{align}
\begin{align}\label{a7}
\begin{aligned}
\z-\Phi_Y\Delta V_s+\Phi\Delta U_{sX},-G\y=&O\big(\|G_X\|^2+\|G_Y\|^2\big).
\end{aligned}
\end{align}
Collect (\ref{a1})-(\ref{a7}), we can obtain the follow inequality:
\[
\begin{split}
\|G\|_{\Y}^2\lesssim\|G_X\|^2+\|G_Y\|^2+|\z\partial_YF_1-\partial_XF_2,G\y|.
\end{split}
\]
By lemma \ref{Hardy}, for any $0<\xi\leqslant1$,
\[
\begin{split}
\|G_X\|^2\lesssim&\frac{1}{\xi^2}\|U_sG_X\|^2+\xi\e\|\sqrt{U_s}G_{XY}\|^2\\
    \lesssim&\frac{1}{\xi^2}\|G\|^2_{\X}+\xi\|G\|^2_\Y.
\end{split}
\]
Similarly,
\[
\begin{split}
\|G_Y\|^2\lesssim&\frac{1}{\xi^2}\|U_sG_Y\|^2+\xi\e\|\sqrt{U_s}G_{YY}\|^2\\
    \lesssim&\frac{1}{\xi^2}\|G\|^2_{\X}+\xi\|G\|^2_\Y.
\end{split}
\]
So we have
\[
\begin{split}
\|G\|_{\Y}^2\lesssim\frac{1}{\xi^2}\|G\|^2_{\X}+\xi\|G\|^2_\Y+|\z\partial_YF_1-\partial_XF_2,G\y|.
\end{split}
\]
By choosing $\xi$ small enough, we can obtain the inequality (\ref{rou}).
\end{proof}

Above lemma shows the second derivatives of $G$, but it is not good because $\e$ is small. Next lemma shows a critical estimate about derivatives of $G$. 
\begin{Lemma}\label{important1}
Let $G$ be the solution of equation (\ref{G-Euq}), then
\begin{align}
&\frac{3}{2}\z U_s^2G_X,G_X\y+\frac{1}{2}\z U_s^2G_Y,G_Y\y+\z V_sU_sG_X,G_Y\y\\\n
&\lesssim (L+\se)(\|G\|_{\X}^2+\|G\|_{\Y}^2)+|\z\partial_YF_1-\partial_XF_2,G\omega\y|.
\end{align}
\end{Lemma}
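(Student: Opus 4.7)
The strategy is to test the fourth–order equation (G-Euq) against the multiplier $-XG$. The streamwise weight $X$ is what produces the small factor $L$ in the error: every integration by parts in $X$ that peels a derivative off the weight drops a factor of $X\leq L$. For the two principal terms, I would write
\[
\langle T_1+T_2,-XG\rangle=\langle \partial_X\big(\nabla\!\cdot\![U_s^2\nabla G]\big),-XG\rangle=\langle \nabla\!\cdot\![U_s^2\nabla G],G+XG_X\rangle,
\]
using $G|_{X=0,L}=0$ to kill the boundary. The $G$ piece gives $-\|G\|_\X^{2}=-\|U_sG_X\|^{2}-\|U_sG_Y\|^{2}$ after one more IBP, and the $XG_X$ piece, after IBP against $(XU_s^2)_X=U_s^{2}+2XU_sU_{sX}$, contributes $-\tfrac12\|U_sG_X\|^{2}+\tfrac12\|U_sG_Y\|^{2}+O\!\big(L(\|G_X\|^{2}+\|G_Y\|^{2})\big)$. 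So in total
\[
\langle T_1+T_2,-XG\rangle=-\tfrac{3}{2}\|U_sG_X\|^{2}-\tfrac{1}{2}\|U_sG_Y\|^{2}+O\!\big(L(\|G_X\|^{2}+\|G_Y\|^{2})\big),
\]
which, once moved to the other side, provides the two positive quadratic forms on the LHS of the lemma.

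Next I would extract the cross term $\langle V_sU_sG_X,G_Y\rangle$ from $\langle R[\Phi],-XG\rangle$. The decisive contribution is $V_s\Phi_{XXY}$: integrating by parts first in $Y$ (boundary at $Y=0$ vanishes since $V_s|_{Y=0}=0$), then in $X$, the main piece $-\langle V_sU_sG_{XX},XG_Y\rangle$ unpacks (after one further IBP using $G_Y|_{X=L}=0$) to $+\langle V_sU_s,G_XG_Y\rangle$ modulo $O\!\big(L(\|G_X\|^{2}+\|G_X\|\|G_Y\|)\big)$. All remaining pieces of $R[\Phi]$ ($V_s\Phi_{YYY}$, $U_{sX}\Delta\Phi$, $\Phi_Y\Delta V_s$, $\Phi\Delta U_{sX}$) are controlled by repeating, mutatis mutandis, the arguments (\ref{a4})–(\ref{a7}) of Lemma \ref{routine}; here the presence of the $X$-weight yields an extra $L$ on every error. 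For the bi–Laplacian I would rewrite $\langle-\varepsilon\Delta^{2}\Phi,-XG\rangle=\varepsilon\langle \Phi_{XX},(XG)_{XX}\rangle+2\varepsilon\langle\Phi_{XY},(XG)_{XY}\rangle+\varepsilon\langle\Phi_{YY},(XG)_{YY}\rangle+\text{(boundary at }Y=0)$. The bulk quadratic pieces are of the form $\varepsilon\langle X U_s,G_{IJ}^{2}\rangle\lesssim L\|G\|_\Y^{2}$; all lower-order cross terms involving $\varepsilon\|G_X\|^{2}$, $\varepsilon\|G_Y\|^{2}$ are controlled by Lemma \ref{Hardy} with $\xi=\sqrt\varepsilon$, producing the $\sqrt\varepsilon(\|G\|_\X^{2}+\|G\|_\Y^{2})$ contribution.

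The boundary trace at $Y=0$ arising from the $\varepsilon\Phi_{YYYY}$ IBP equals $2\varepsilon\int_0^L XU_{sY}(X,0)G_Y(X,0)^{2}\dd X$; since $\varepsilon U_{sY}|_{Y=0}\sim\sqrt\varepsilon\, u^{0}_{py}(X,0)>0$, this quantity is nonnegative and, when placed on the LHS, can simply be dropped. Finally the errors of the form $O(L(\|G_X\|^{2}+\|G_Y\|^{2}))$ coming from Step~1 and from $R[\Phi]$ are turned into $O\!\big(L(\|G\|_\X^{2}+\|G\|_\Y^{2})\big)$ via Lemma \ref{Hardy} with $\xi=1$. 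Collecting all contributions and recognising $\omega=X$ in the forcing pairing, one arrives at the desired inequality.

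The hard part will be the bookkeeping on two fronts: (a) verifying that the cross term from $V_s\Phi_{XXY}$ indeed appears with the right sign so that $+\langle V_sU_sG_X,G_Y\rangle$ can be moved to the LHS, which requires being careful about double IBPs in mixed $(X,Y)$ directions and using $V_s|_{Y=0}=0$ at exactly the right moment; and (b) confirming that the boundary term from $\varepsilon\Phi_{YYYY}$ is both of favorable sign and of size $O(\sqrt\varepsilon)$—this is what forces the factor $\sqrt\varepsilon$ (rather than $\varepsilon$) in the bound, and it relies delicately on the cancellation between the $\varepsilon$ prefactor and the $\sqrt\varepsilon^{-1}$ singularity of $U_{sY}$ at $Y=0$ inherited from the Prandtl profile.
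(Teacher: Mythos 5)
The overall framework is right — test the fourth–order equation for $G$ against a spatially weighted copy of $-G$ and exploit the $O(L)$ smallness of the weight — but you have chosen the \emph{wrong} weight: you take $-XG$ whereas the paper tests against $-G\omega$ with $\omega = L - x$. This is not a cosmetic difference: $\omega_X=-1$ versus $\omega_X=+1$ flips the sign of every term that survives an integration by parts which transfers an $X$-derivative onto the weight. With $\omega=L-x$ the paper obtains the pairing identity directly in the form
\begin{equation*}
\tfrac{3}{2}\z U_s^2G_X,G_X\y+\tfrac{1}{2}\z U_s^2G_Y,G_Y\y+\z V_sU_sG_X,G_Y\y
+\e\z U_{sY}G_Y,G_Y\omega\y_{Y=0}+[\text{positive bi-Laplacian bulk}]
=O\big((L+\se)\|\nabla G\|^2\big)+\z\p_YF_1-\p_XF_2,-G\omega\y,
\end{equation*}
so the nonnegative boundary trace and the nonnegative bulk pieces sit on the \emph{same} side as the three desired quadratic forms and can simply be dropped. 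With your multiplier $-XG$ the first two principal terms come out with coefficients $-\tfrac32$ and $-\tfrac12$ (your own computation), the cross term from $V_s\Phi_{XXY}$ also comes out with the opposite sign, and the bi-Laplacian bulk and boundary terms remain nonnegative (now weighted by $X$ rather than $L-x$). After you move the negatively-signed quadratic forms to the other side to recover the stated inequality, the boundary trace $\e\int_0^L X\,U_{sY}(X,0)\,G_Y(X,0)^2\,\dd X$ ends up on the forcing side of the inequality, where it cannot be absorbed by $(L+\se)(\|G\|_\X^2+\|G\|_\Y^2)$ or by the data — the $\e$ prefactor only brings it down to order $\se$ against a trace of $G_Y$ that is not controlled by $\|G\|_\X$, $\|G\|_\Y$. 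Your remark that it ``can simply be dropped when placed on the LHS'' is exactly the step that is blocked by your choice of weight.

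Two smaller points. First, tracking the sign of the cross contribution more carefully (IBP in $Y$ on $\z V_s\Phi_{XXY},-XG\y$ yields $+\z V_s\Phi_{XX},XG_Y\y$, not $-\z V_sU_sG_{XX},XG_Y\y$, and after the $X$-IBP this gives $-\z V_sU_sG_X,G_Y\y$, not $+$), so your sign claims in that paragraph are inconsistent even internally. Second, the factor in front of the boundary trace is $1$ after the full chain of integrations by parts (the $2\e$ you quote is only what appears after the first $Y$-IBP, before the remaining $\e\z 2U_{sY}G_Y+U_{sYY}G,G_{YY}\omega\y$ term is integrated by parts once more and cancels half of it). Both issues would resolve themselves if you redo the computation with $\omega = L-x$.
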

\begin{proof}
Take the inner product of $(\ref{G-Euq})_1$ with $-G\omega$, where $\omega=L-x$.

Because $\omega\leqslant L$, the first term is
\begin{align}\label{b1}
\begin{aligned}
\z\partial_{XX}[U_s^2G_X],-G\omega\y=&-\z\partial_X[U_s^2G_X],G\y+\z\partial_X[U_s^2G_X],G_X\omega\y\\
                                    =&\frac{3}{2}\z U_s^2G_X,G_X\y+\z U_sU_{sX}G_X,G_X\omega\y\\
                                    =&\frac{3}{2}\z U_s^2G_X,G_X\y+O\big(L\|G_X\|^2\big).
\end{aligned}
\end{align}
Second term
\begin{align}\label{b2}
\begin{aligned}
\z\partial_{XY}[U_s^2G_Y],-G\omega\y=&\z\partial_X[U_s^2G_Y],G_Y\omega\y\\
                                  =&\z U_s^2G_{XY},G_Y\omega\y+\z2U_sU_{sX}G_Y,G_Y\omega\y\\
                                  =&\frac{1}{2}\z U_s^2G_Y,G_Y\y+\z U_sU_{sX}G_Y,G_Y\omega\y\\
                =&\frac{1}{2}\z U_s^2G_Y,G_Y\y+O\big(L\|G_Y\|^2\big).
\end{aligned}
\end{align}
Bi-Laplacian term is
\[
\z-\e\Delta^2\Phi,-G\omega\y=\e\z\Phi_{XXXX}+2\Phi_{XXYY}+\Phi_{YYYY},G\omega\y.
\]
\begin{align}\label{b3.1}
\begin{aligned}
\e\z\Phi_{XXXX},G\omega\y=&-\e\z\Phi_{XXX},G_X\omega\y+\e\z\Phi_{XXX},G\y\\
                         =&\e\z\Phi_{XX},G_{XX}\omega\y-2\e\z\Phi_{XX},G_X\y\\
                         =&\e\z U_sG_{XX}+2U_{sX}G_X+U_{sXX}G,G_{XX}\omega\y\\
                          &-2\e\z U_sG_{XX}+2U_{sX}G_X+U_{sXX}G,G_X\y\\
                         =&\e\z U_sG_{XX},G_{XX}\omega\y-\e\z 2U_{sX}G_X+U_{sXX}G,G_X\y\\
                          &-\e\z2U_{sXX}G_X+U_{sXXX}G,G_X\omega\y\\
                =&\e\z U_sG_{XX},G_{XX}\omega\y+O\big(\e\|G_X\|\big).
\end{aligned}
\end{align}
Next
\[
\begin{split}
2\e\z\Phi_{XXYY},G\omega\y=&-2\e\z\Phi_{XXY},G_Y\omega\y=2\e\z\Phi_{XY},G_{XY}\omega\y-2\e\z\Phi_{XY},G_Y\y\\
                          =&2\e\z U_sG_{XY}+U_{sX}G_Y+U_{sY}G_X+U_{sXY}G,G_{XY}\omega\y\\
                           &-2\e\z U_sG_{XY}+U_{sX}G_Y+U_{sY}G_X+U_{sXY}G,G_Y\y.
\end{split}
\]
$2\e\z U_sG_{XY},G_{XY}\omega\y$ is good, and
\[
\begin{split}
  &2\e\z U_{sX}G_Y+U_{sY}G_X+U_{sXY}G,G_{XY}\omega\y\\
 =&-\e\z U_{sXX}G_Y,G_Y\omega\y+\e\z U_{sX}G_Y,G_Y\y-\e\z U_{sYY}G_X,G_X\omega\y\\
  &-2\e\z U_{sXY}G_Y,G_X\omega\y-2\e\z U_{sXYY}G,G_Y\omega\y\\
 =&O\big(\e\|G_Y\|^2+\e\|U_{sYY}\|_\infty\|G_X\sqrt\omega\|^2\\
  &+\e\|U_{sXY}\|_\infty\|G_X\|\|G_Y\|+\e\|YU_{sXYY}\|_\infty\|\frac{G}{Y}\|\|G\|\big)\\
 =&O\big((L+\se)(\|G_X\|^2+\|G_Y\|^2)\big),
\end{split}
\]

and
\[
\begin{split}
&-2\e\z U_sG_{XY}+U_{sX}G_Y+U_{sY}G_X+U_{sXY}G,G_Y\y\\
=&-\e\z U_{sX}G_Y,G_Y\y-2\e\z U_{sY}G_Y,G_X\y-2\e\z U_{sXY}G,G_Y\y\\
=&O\big(\e\|G_Y\|^2+\e\|U_{sY}\|_\infty\|G_X\|\|G_Y\|+\e\|U_{sXY}Y\|_\infty\|\frac{G}{Y}\|\|G_Y\|\big)\\
=&O\big((L+\se)(\|G_X\|^2+\|G_Y\|^2)\big).
\end{split}
\]
Therefore
\begin{align}\label{b3.2}
\begin{aligned}
2\e\z\Phi_{XXYY},G\omega\y=&2\e\z U_sG_{XY},G_{XY}\omega\y+O\big((L+\se)\|\nabla G\|^2\big).
\end{aligned}
\end{align}
Integrating by parts, we have
\[
\begin{split}
\e\z\Phi_{YYYY},G\omega\y=&-\e\z\Phi_{YYY},G_Y\omega\y=\e\z \Phi_{YY},G_Y\omega\y|_{Y=0}+\e\z\Phi_{YY},G_{YY}\omega\y\\
                         =&\e\z U_sG_{YY}+2U_{sY}G_Y+U_{sYY}G,G_Y\omega\y|_{Y=0}\\
                          &+\e\z U_sG_{YY}+2U_{sY}G_Y+U_{sYY}G,G_{YY}\omega\y\\
                         =&2\e\z U_{sY}G_Y,G_Y\omega\y|_{Y=0}+\e\z U_sG_{YY},G_{YY}\omega\y+\e\z2U_{sY}G_Y+U_{sYY}G,G_{YY}\omega\y\\
                         =&\e\z U_{sY}G_Y,G_Y\omega\y|_{Y=0}+\e\z U_sG_{YY},G_{YY}\omega\y-\e\z 2U_{sYY}G_Y+U_{sYYY}G,G_Y\omega\y.
\end{split}
\]
Because $U_{sY}|_{Y=0}>0$, the first two terms are positive above, and
\[
\begin{split}
-\e\z 2U_{sYY}G_Y+U_{sYYY}G,G_Y\omega\y=&O\big(\e L\|U_{sYY}\|_\infty\|G_Y\|^2+\e L\|U_{sYYY}Y\|_\infty\|\frac{G}{Y}\|\|G_Y\|\big)\\
=&O\big(L\|G_Y\|^2\big),
\end{split}
\]
then we obtain
\begin{align}\label{b3.3}
\begin{aligned}
\e\z\Phi_{YYYY},G\omega\y=&
\e\z U_{sY}G_Y,G_Y\omega\y|_{Y=0}+\e\z U_sG_{YY},G_{YY}\omega\y+O\big(L\|G_Y\|^2\big).
\end{aligned}
\end{align}
Finally, we deal with $R[\Phi]$ term. Since
\[
\begin{split}
\z V_s\Phi_{XXY},-G\omega\y=&\z V_s\Phi_{XY},G_X\omega\y+\z V_{sX}\Phi_{XY},G\omega\y-\z V_s\Phi_{XY},G\y\\
                           =&\z V_s(U_sG_{XY}+U_{sX}G_Y+U_{sY}G_X+U_{sXY}G),G_X\omega\y\\
                            &-\z V_{sX}\Phi_X,G_Y\omega\y-\z V_{sXY}\Phi_X,G\omega\y+\z V_s\Phi_X,G_Y\y+\z V_{sY}\Phi_X,G\y\\
                           =&-\frac{1}{2}\z(V_sU_s)_YG_X,G_X\omega\y+\z V_s(U_{sX}G_Y+U_{sY}G_X+U_{sXY}G),G_X\omega\y\\
                            &-\z V_{sX}\Phi_X,G_Y\omega\y-\z V_{sXY}\Phi_X,G\omega\y+\z V_s\Phi_X,G_Y\y+\z V_{sY}\Phi_X,G\y.
\end{split}
\]
Notice that
\[
\begin{split}
&\|V_sU_{sY}\|_{\infty}\lesssim\frac{\|v^0_eu^0_{by}\|_{\infty}}{\se}+1\lesssim\|\frac{v^0_e}{Y}\|_{\infty}\|yu^0_{by}\|_{\infty}+1\lesssim1,\\
&\|\Phi_X\|=\|U_sG_X+U_{sX}G\|\lesssim\|G_X\|,\\
&\|\Phi_Y\|=\|U_sG_Y+U_{sY}G\|\lesssim\|G_Y\|+\|U_{sY}Y\|_{\infty}\|\frac{G}{Y}\|\lesssim\|G_Y\|,\\
&\|G\|\lesssim L\|G_X\|.
\end{split}
\]
We obtain
\begin{align}\label{b4}
\begin{aligned}
\z V_s\Phi_{XXY},-G\omega\y=&\z V_s\Phi_X,G_Y\y+O\big(L\|V_sU_{sY}\|_\infty\|G_X\|^2+L\|G_X\|^2+L\|G_Y\|^2+L\|G\|^2\big)\\
                           =&\z V_sU_sG_X,G_Y\y+\z V_sU_{sX}G,G_Y\y+O\big(L(\|G_X\|^2+\|G_Y\|^2)\big)\\
                           =&\z V_sU_sG_X,G_Y\y+O\big(L(\|G_X\|^2+\|G_Y\|^2)\big).
\end{aligned}
\end{align}
The others are easy. In fact,
\begin{align}\label{b5}
\begin{aligned}
\z V_s\Phi_{YYY},-G\omega\y=&\z V_s\Phi_{YY},G_Y\omega\y+\z V_{sY}\Phi_{YY},G\omega\y\\
                           =&\z V_s(U_sG_{YY}+2U_{sY}G_Y+U_{sYY}G),G_Y\omega\y\\
                            &-\z V_{sY}\Phi_Y,G_Y\omega\y-\z V_{sYY}\Phi_Y,G\omega\y\\
                           =&-\frac{1}{2}\z (V_sU_s)_YG_Y,G_Y\omega\y+\z V_s(2U_{sY}G_Y+U_{sYY}G),G_Y\omega\y\\
                            &-\z V_{sY}\Phi_Y,G_Y\omega\y-\z V_{sYY}\Phi_Y,G\omega\y\\
                           =&O\big(L\|G_Y\|^2+L\|\frac{V_s}{Y}\|_\infty\|Y^2U_{sY}\|_\infty\|\frac{G}{Y}\|\|G_Y\|+L\|YV_{sYY}\|_\infty\|\frac{G}{Y}\|\|\Phi_Y\|\big)\\
                           =&O\big(L\|G_Y\|^2\big)
\end{aligned}
\end{align}
\begin{align}\label{b6}
\begin{aligned}
\z -U_{sX}\Delta\Phi,-G\omega\y=&-\z U_{sX}\Phi_X,G_X\omega\y-\z U_{sXX}\Phi_X,G\omega\y+\z U_{sX}\Phi_X,G\y\\
                                &-\z U_{sX}\Phi_Y,G_Y\omega\y-\z U_{sXY}\Phi_Y,G\omega\y\\
                                =&O\big(L(\|G_X\|^2+\|G_Y\|^2)\big).
\end{aligned}
\end{align}
\begin{align}\label{b7}
\begin{aligned}
\z-\Phi_Y\Delta V_s+\Phi\Delta U_{sX},-G\omega\y=&O\big(L(\|G_X\|^2+\|G_Y\|^2)\big).
\end{aligned}
\end{align}
Collect (\ref{b1})-(\ref{b7}), we get
\[
\begin{split}
&\frac{3}{2}\z U_s^2G_X,G_X\y+\frac{1}{2}\z U_s^2G_Y,G_Y\y+\z V_sU_sG_X,G_Y\y\\
&\lesssim (L+\se)\|\nabla G\|^2+|\z\partial_YF_1-\partial_XF_2,G\omega\y|.
\end{split}
\]
By Lemma \ref{Hardy} for $\xi=1$,
\[
\begin{split}
\|\nabla G\|^2\lesssim\|G\|_{\X}^2+\|G\|_{\Y}^2.
\end{split}
\]
Finally we have 
\[
\begin{split}
&\frac{3}{2}\z U_s^2G_X,G_X\y+\frac{1}{2}\z U_s^2G_Y,G_Y\y+\z V_sU_sG_X,G_Y\y\\
\lesssim &(L+\se)(\|G\|_{\X}^2+\|G\|_{\Y}^2)+|\z\partial_YF_1-\partial_XF_2,G\omega\y|.
\end{split}
\]
So we finish the proof.
\end{proof}

Since $v^0_e\neq0$, we need to deal with $\z V_sU_sG_X,G_Y\y$ in above lemma. Notice $V_s\approx v^0_e$ is small near the boundary $\{(X,Y)|Y=0\}$, it is sufficient to estimate the stream-function away from the boundary layer. From now on, we write $\ed(Y):=\eta(\frac{Y}{\delta})$. Here $\eta$ is a smooth non-negative function s.t. $\eta|_{[0,1]}=0$ and $\eta|_{[2,\infty)}=1$. $\delta$ is a small constant satisfying $0<L^\frac{1}{2}+\e^\frac{1}{4}\leqslant\delta\ll1$. Next we show the key estimate of $\Phi_X+\frac{V_s}{U_s}\Phi_Y$ in the domain $\{(X,Y)|Y\geqslant 2\delta)\}$.
\begin{Lemma}\label{non-shear}
Let $G$ be the solution of equation (\ref{Phi-Euq}), $0<L^\frac{1}{2}+\e^\frac{1}{4}\leqslant\delta\ll1$, then
\begin{align}
\z \Phi_X+\frac{V_s}{U_s}\Phi_Y,(\Phi_X+\frac{V_s}{U_s}\Phi_Y)\ed\y\lesssim&(L+\se)(\|G\|^2_{\X}+\|G\|^2_{\Y})+\|F_1\|^2+\|F_2\|^2
\end{align}
\end{Lemma}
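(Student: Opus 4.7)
The target is a coercive estimate on the ``across-streamline velocity'' $\tilde V := \Phi_X + (V_s/U_s)\Phi_Y = (V_sU - U_sV)/U_s$, localized to the outer region $\{Y \ge \de\}$. My plan is to form the combination $V_s\cdot(\ref{LNSE})_1 - U_s\cdot(\ref{LNSE})_2$: using incompressibility $V_{sY} = -U_{sX}$ together with the algebraic identity $V_sU - U_sV = U_s\tilde V$, the inertial contribution collapses to a transport of $U_s\tilde V$ along $(U_s,V_s)$, giving
\begin{equation*}
(U_s\p_X + V_s\p_Y)(U_s\tilde V) + 2(V_sU_{sX} - U_sV_{sX})U - 2(U_sU_{sX} + V_sU_{sY})V + (V_sP_X - U_sP_Y) - \e(V_s\Delta U - U_s\Delta V) = V_sF_1 - U_sF_2.
\end{equation*}
I would then test this against the weighted function $\chi := \tilde V\ed(L - X)/U_s$. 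The $(L-X)$ factor plays the same role as the weight $\omega = L - X$ in the proof of Lemma~\ref{important1}, while the $\ed/U_s$ factor localizes to the outer region, where $U_s \ge c > 0$.

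The central mechanism is the integration by parts of the transport term in $X$: using $\tilde V|_{X=0,L} = 0$ and $\ed|_{Y=0} = 0$, a direct computation yields
\begin{equation*}
\int (U_s\p_X + V_s\p_Y)(U_s\tilde V)\cdot\chi = \tfrac{1}{2}\int U_s\tilde V^2\ed + \int (L-X)\Bigl(U_{sX} + \tfrac{V_sU_{sY}}{U_s}\Bigr)\tilde V^2\ed - \tfrac{1}{2}\int (L-X)V_s\tilde V^2\ed_Y.
\end{equation*}
The first piece, produced by $\p_X(L-X) = -1$, is coercive and bounded below by $\tfrac{c}{2}\|\tilde V\sqrt\ed\|^2$; the second is of size $L\|\tilde V\sqrt\ed\|^2$, absorbable for $L$ small.

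The main technical obstacle is the boundary-strip contribution $-\tfrac{1}{2}\int (L-X)V_s\tilde V^2\ed_Y$, supported on $\{\de \le Y \le 2\de\}$. There $|V_s| \lesssim \de$ (since $v^0_e(X,0) = 0$) and $|\ed_Y| \lesssim \de^{-1}$, so this piece is bounded by $CL\int_{\text{strip}}\tilde V^2$. Using the structural identity $\tilde V = U_sG_X + V_sG_Y + G(U_{sX} + V_sU_{sY}/U_s)$, the smallness $|V_s| \lesssim \de$ on the strip, Hardy's inequality (Lemma~\ref{Hardy}) for $G_Y$, and Poincaré in $X$ (via $G|_{X=0,L} = 0$) for $G$, I would obtain
\begin{equation*}
\int_{\text{strip}}\tilde V^2 \lesssim \|G\|_\X^2 + \de^2\|G\|_\Y^2 + L^2(\|G\|_\X^2 + \|G\|_\Y^2),
\end{equation*}
and since $\de^2 \le 2(L + \se)$ by the hypothesis $\de \ge L^{1/2} + \e^{1/4}$, multiplication by $L$ yields the desired $(L + \se)(\|G\|_\X^2 + \|G\|_\Y^2)$. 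The remaining terms are routine: the lower-order contributions are controlled via Cauchy-Schwarz using $\|U\|^2 + \|V\|^2 \lesssim \|G\|_\X^2 + \|G\|_\Y^2$ together with $(L-X) \le L$; the viscous term is handled by rewriting $V_s\Delta U - U_s\Delta V = \Delta(V_sU - U_sV) + \text{l.o.t.}$ and integrating by parts to extract a nonnegative $\e$-weighted $H^1$ contribution plus $\se$-small remainders; the forcing gives $|\int(V_sF_1 - U_sF_2)\chi| \lesssim L(\|F_1\| + \|F_2\|)\|\tilde V\sqrt\ed\|$, absorbable with a small fraction of $\|F_i\|^2$ remaining on the right. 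The pressure $V_sP_X - U_sP_Y$ is the last subtle point: it is handled by the identity $V_sP_X - U_sP_Y = \p_X(V_sP) - \p_Y(U_sP) + (U_{sY} - V_{sX})P$, transferring derivatives off $P$ onto $\chi$ and using direct $L^2$ bounds on $P$ from the momentum equations; alternatively, the whole test may be run against the pressure-free stream-function equation (\ref{Phi-Euq}) at the cost of additional integration-by-parts bookkeeping. Absorption of all sub-coercive terms closes the estimate.
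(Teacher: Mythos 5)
Your approach is genuinely different from the paper's. The paper tests the pressure-free vorticity equation (\ref{Phi-Euq}) against $-\frac{1}{U_s}[\Phi_X+Q_s\Phi_Y]\,\ot\,\ed$ with the \emph{quadratic} weight $\ot=x(L-x)$, rewrites $U_s\Delta\Phi_X+V_s\Delta\Phi_Y=U_s\Delta W-U_s[\,2\nabla Q_s\cdot\nabla\Phi_Y+(\Delta Q_s)\Phi_Y\,]$ with $W=\Phi_X+Q_s\Phi_Y$, and extracts the coercive term $\|W\sqrt{\ed}\|^2$ from $-\z\Delta W,W\ot\ed\y$ via $\ot_{XX}=-2$ — i.e.\ the coercivity is elliptic. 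You stay in the velocity formulation, form $V_s\cdot(\ref{LNSE})_1-U_s\cdot(\ref{LNSE})_2$ to expose transport of $U_s\tilde V$, and use a \emph{linear} weight $(L-X)$; the coercivity is then genuinely a transport effect coming from $\p_X(L-X)=-1$. Your transport integration by parts, the strip estimate $\int_{\text{strip}}\tilde V^2$ with $|V_s|\lesssim\de$ and $\de^2\lesssim L+\se$, and the viscous term via $V_s\Delta U-U_s\Delta V=\Delta(U_s\tilde V)+\text{l.o.t.}$ all check out (modulo a harmless sign slip in the lower-order inertial remainder, which should read $+2(U_sU_{sX}+V_sU_{sY})V$).

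The genuine gap is the pressure. Your test pair $(V_s\chi,-U_s\chi)$ is not divergence-free, so $\z V_sP_X-U_sP_Y,\chi\y$ does not vanish, and the route you indicate — ``direct $L^2$ bounds on $P$ from the momentum equations'' — is not available: the $\e\Delta\U$ contribution to $\nabla P$ is not controlled a priori by the $\X$- and $\Y$-norms, and the domain is unbounded in $Y$ so Poincar\'e for $P$ fails. The term \emph{can} be handled by substituting $\nabla P$ from (\ref{LNSE}) into $\z\nabla P,(V_s\chi,-U_s\chi)\y$ and integrating the viscous piece by parts back onto $\nabla\chi$ (gaining the needed $\e$-weighted second derivatives of $\Phi$ localized to $\{Y\ge\de\}$ where $U_s\gtrsim1$), but this step is precisely the missing and nontrivial part. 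Your proposed fallback — run the same test against (\ref{Phi-Euq}) — would not rescue you: once you pass to the vorticity equation, the coercive mechanism for $\|\tilde V\sqrt\ed\|^2$ is elliptic, and with a linear weight $(L-X)$ one has $\p_X^2(L-X)=0$, so $-\z\Delta W,W(L-X)\ed\y$ produces $\|\nabla W\sqrt{(L-X)\ed}\|^2$ but no $\|W\sqrt\ed\|^2$. That is exactly why the paper needs $\ot=x(L-x)$; you cannot simultaneously keep the linear weight and drop the pressure.
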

\begin{proof}
Take the inner product of $(\ref{Phi-Euq})_1$ with $-\frac{1}{U_s}[\Phi_X+Q_s\Phi_Y]\ot(X)\ed(Y)$, where $Q_s=\frac{V_s}{U_s}$, $\ot=x(L-x)$.

Because 
\[
\begin{split}
U_s\Delta\Phi_X+V_s\Delta\Phi_Y&=U_s[\Delta\Phi_X+Q_s\Delta\Phi_Y]\\
                               &=U_s\Delta[\Phi_X+Q_s\Phi_Y]-U_s[2\nabla Q_s\cdot\nabla\Phi_Y+(\Delta Q_s) \Phi_Y],
\end{split}
\]

We have 
\[
\begin{split}
&\z U_s\Delta\Phi_X+V_s\Delta\Phi_Y, -\frac{1}{U_s}[\Phi_X+Q_s\Phi_Y]\ot(X)\ed(Y)\y\\
=&-\z \Delta[\Phi_X+Q_s\Phi_Y],(\Phi_X+Q_s\Phi_Y)\ot\ed\y\\
 &+\z 2\nabla Q_s\cdot\nabla\Phi_Y+(\Delta Q_s) \Phi_Y, (\Phi_X+Q_s\Phi_Y)\ot\ed\y.
\end{split}
\]
We calculate the first part in the right hand of above equality:
\[
\begin{split}
-\z \Delta[\Phi_X+Q_s\Phi_Y],(\Phi_X+Q_s\Phi_Y)\ot\ed\y=&\|\nabla[\Phi_X+Q_s\Phi_Y]\sqrt{\ot\ed}\|^2\\
   &+\z[\Phi_X+Q_s\Phi_Y]_X,[\Phi_X+Q_s\Phi_Y]\ot_X\ed\y\\
   &+\z[\Phi_X+Q_s\Phi_Y]_Y,[\Phi_X+Q_s\Phi_Y]\ot\ed'\y\\
  =&\|\nabla[\Phi_X+Q_s\Phi_Y]\sqrt{\ot\ed}\|^2+\|[\Phi_X+Q_s\Phi_Y]\sqrt{\ed}\|^2\\
   &+\frac{1}{2}\z\Phi_X+Q_s\Phi_Y,[\Phi_X+Q_s\Phi_Y]\ot\ed''\y.\\ 
\end{split}
\]
Because $\ot\lesssim L^2$, $|\ed''|\lesssim\frac{1}{\delta^2}\lesssim \frac{1}{L}$ and $Q_s=O\big(1\big)$, 
\[
\begin{split}
\z\Phi_X+Q_s\Phi_Y,[\Phi_X+Q_s\Phi_Y]\ot\ed''\y\lesssim L\|\na \Phi\|^2.
\end{split}
\]

According to the fact (\ref{away b}), when $y\geqslant\delta$,
\begin{align}\label{fact}
\begin{aligned}
\na^j U_s&=\na^j u^0_e+O\big(\se\big)=O\big(1\big),\\
\na^j V_s&=\na^j v^0_e+O\big(\se\big)=O\big(1\big),\\
\na^j Q_s&=\na^j\big[\frac{v^0_e}{u^0_e}\big]+O\big(\se\big)=O\big(1\big), 
\end{aligned}
\end{align}
the second part can be estimated by the following way:  
\[
\begin{split}
&\z 2\nabla Q_s\cdot\nabla\Phi_Y+(\Delta Q_s) \Phi_Y, (\Phi_X+Q_s\Phi_Y)\ot\ed\y\\
= & 2\z Q_{sX}\Phi_{XY}, \Phi_X\ot\ed\y+2\z Q_{sY}\Phi_{YY},\Phi_X\ot\ed\y\\
   &+  2\z Q_{sX}\Phi_{XY}, Q_s\Phi_Y\ot\ed\y+2\z Q_{sY}\Phi_{YY},Q_s\Phi_Y\ot\ed\y\\
   &+\z(\Delta Q_s) \Phi_Y, (\Phi_X+Q_s\Phi_Y)\ot\ed\y\\
= & -\z [Q_{sX}\ot\ed]_Y,\Phi_X^2\y+\z [Q_{sY}\ot\ed]_X,\Phi_{Y}^2\y\\
  & -2\z [Q_{sY}\ot\ed]_Y,\Phi_Y\Phi_X\y-\z [Q_{sX}\ot\ed]_X,\Phi_Y^2\y\\
  & -\z [Q_{sY}Q_s\ot\ed]_Y,\Phi_Y^2\y+\z(\Delta Q_s) \Phi_Y, (\Phi_X+Q_s\Phi_Y)\ot\ed\y\y\\
= &O\big(L\|\Phi_X\|^2+L\|\Phi_Y\|^2\big).
\end{split}
\]
Here we use the fact $\ot\lesssim L^2$, $|\ot_X|\lesssim L$ and $|\ed'|\lesssim\de\lesssim\frac{1}{\sqrt{L}}$. So we conclude that
\begin{align}\label{c1}
\begin{aligned}
&\z U_s\Delta\Phi_X+V_s\Delta\Phi_Y, -\frac{1}{U_s}[\Phi_X+Q_s\Phi_Y]\ot(X)\ed(Y)\y\\
=&\|\nabla[\Phi_X+Q_s\Phi_Y]\sqrt{\ot\ed}\|^2+\|[\Phi_X+Q_s\Phi_Y]\sqrt{\ed}\|^2+O\big(L\|\na \Phi\|^2\big).
\end{aligned}
\end{align}

Next we deal with the bi-Laplacian term. 
\[
\begin{split}
\e\z\Delta^2\Phi, \frac{1}{U_s}[\Phi_X+Q_s\Phi_Y]\ot\ed\y=
\e\z\Phi_{XXXX}+2\Phi_{XXYY}+\Phi_{YYYY},\frac{1}{U_s}[\Phi_X+Q_s\Phi_Y]\ot\ed\y.
\end{split}
\]
Integration by Parts is allowed in $X$ direction because $\Phi_X|_{\p \Omega}=0,\Phi_{\p \Omega}=0$ and $\ot|_{X=0,X=L}=0$, in $Y$ direction because $\eta|_{[0,1]}=0$. Use the inequality (\ref{fact}) again, we can see 
\[
\begin{split}
\e\z\Phi_{XXXX},\Phi_X\frac{\ot\ed}{U_s}\y=&-\e\z\Phi_{XXX},\Phi_{XX}\frac{\ot\ed}{U_s}\y-\e\z\Phi_{XXX},\Phi_X\p_X[\frac{\ot\ed}{U_s}]\y\\
=&\frac{3\e}{2}\z\Phi_{XX}^2,\p_X[\frac{\ot\ed}{U_s}]\y+\e\z\Phi_{XX},\Phi_X\p^2_X[\frac{\ot\ed}{U_s}]\y\\
=&O\big(L\e\|\Phi_{XX}\|^2+\e\|\Phi_X\|\|\Phi_{XX}\|\big)\\
=&O\big(\se\|\Phi_X\|^2+(L+\se)\e\|\Phi_{XX}\|^2\big).
\end{split}
\]
\[
\begin{split}
\e\z\Phi_{XXXX},\Phi_Y\frac{Q_s\ot\ed}{U_s}\y=&-\e\z\Phi_{XXX},\Phi_{XY}\frac{Q_s\ot\ed}{U_s}\y-\e\z\Phi_{XXX},\Phi_Y\p_X[\frac{Q_s\ot\ed}{U_s}]\y\\
=&\e\z\Phi_{XX},\Phi_{XXY}\frac{Q_s\ot\ed}{U_s}\y+2\e\z\Phi_{XX},\Phi_{XY}\p_X[\frac{Q_s\ot\ed}{U_s}]\y+\e\z\Phi_{XX},\Phi_{Y}\p^2_{X}[\frac{Q_s\ot\ed}{U_s}]\y\\
=&-\frac{\e}{2}\z\Phi_{XX}^2,\p_Y[\frac{Q_s\ot\ed}{U_s}]\y+O\big(L\e\|\nabla^2\Phi\|^2+\e\|\Phi_{Y}\|\|\Phi_{XX}\|\big)\\
=&O\big(\se\|\Phi_Y\|^2+(L+\se)\e\|\na^2\Phi\|^2\big).
\end{split}
\]
\[
\begin{split}
2\e\z\Phi_{XXYY},\Phi_X\frac{\ot\ed}{U_s}\y=&-2\e\z\Phi_{XXY},\Phi_{XY}\frac{\ot\ed}{U_s}\y-2\e\z\Phi_{XXY},\Phi_X\p_Y[\frac{\ot\ed}{U_s}]\y\\
=&\e\z\Phi^2_{XY},\p_X[\frac{\ot\ed}{U_s}]\y+2\e\z\Phi_{XY},\Phi_{XX}\p_Y[\frac{\ot\ed}{U_s}]\y+2\e\z\Phi_{XY},\Phi_X\p^2_{XY}[\frac{\ot\ed}{U_s}]\y\\
=&O\big(L\e\|\na^2\Phi\|^2+\e\|\Phi_X\|\|\Phi_{XY}\|\big)\\
=&O\big(\sqrt{\e}\|\Phi_X\|^2+(L+\sqrt{\e})\e\|\na^2\Phi\|^2\big).
\end{split}
\]
\[
\begin{split}
2\e\z\Phi_{XXYY},\Phi_Y\frac{Q_s\ot\ed}{U_s}\y=&-2\e\z\Phi_{XYY},\Phi_{XY}\frac{Q_s\ot\ed}{U_s}\y-2\e\z\Phi_{XYY},\Phi_Y\p_X[\frac{Q_s\ot\ed}{U_s}]\y\\
=&\e\z\Phi^2_{XY},\p_Y[\frac{Q_s\ot\ed}{U_s}]\y+2\e\z\Phi_{YY},\Phi_{XY}\p_X[\frac{Q_s\ot\ed}{U_s}]\y+2\e\z\Phi_{YY},\Phi_Y\p^2_X[\frac{Q_s\ot\ed}{U_s}]\y\\
=&O\big(L\e\|\na^2\Phi\|^2+\e\|\Phi_Y\|\|\Phi_{YY}\|\big)\\
=&O\big(\sqrt{\e}\|\Phi_Y\|^2+(L+\sqrt{\e})\e\|\na^2\Phi\|^2\big).
\end{split}
\]
\[
\begin{split}
\e\z\Phi_{YYYY},\Phi_X\frac{\ot\ed}{U_s}\y=&-\e\z\Phi_{YYY},\Phi_{XY}\frac{\ot\ed}{U_s}\y-\e\z\Phi_{YYY},\Phi_X\p_Y[\frac{\ot\ed}{U_s}]\y\\
=&\e\z\Phi_{YY},\Phi_{XYY}\frac{\ot\ed}{U_s}\y+2\e\z\Phi_{YY},\Phi_{XY}\p_Y[\frac{\ot\ed}{U_s}]\y+\e\z\Phi_{YY},\Phi_{X}\p^2_Y[\frac{\ot\ed}{U_s}]\y\\
=&-\frac{\e}{2}\z \Phi_{YY},\Phi_{YY}\p_X[\frac{\ot\ed}{U_s}]\y+O\big(L\e\|\na^2\Phi\|^2+\e\|\Phi_X\|\|\Phi_{YY}\|\big)\\
=&O\big(\sqrt{\e}\|\Phi_X\|^2+(L+\sqrt{\e})\e\|\na^2\Phi\|^2\big).
\end{split}
\]
\[
\begin{split}
\e\z\Phi_{YYYY},\Phi_Y\frac{Q_s\ot\ed}{U_s}\y=&-\e\z\Phi_{YYY},\Phi_{YY}\frac{Q_s\ot\ed}{U_s}\y-\e\z\Phi_{YYY},\Phi_Y\p_Y[\frac{Q_s\ot\ed}{U_s}]\y\\
=&\frac{3\e}{2}\z\Phi^2_{YY},\p_Y[\frac{Q_s\ot\ed}{U_s}]\y+\e\z\Phi_{YY},\Phi_Y\p^2_Y[\frac{Q_s\ot\ed}{U_s}]\y\\
=&O\big(L\e\|\na^2\Phi\|^2+\e\|\Phi_Y\|\|\Phi_{YY}\|\big)\\
=&O\big(\sqrt{\e}\|\Phi_Y\|^2+(L+\sqrt{\e})\e\|\na^2\Phi\|^2\big).
\end{split}
\]
So we conclude that
\begin{align}\label{c2}
\begin{aligned}
&\e\z\Phi_{XXXX}+2\Phi_{XXYY}+\Phi_{YYYY},\frac{1}{U_s}[\Phi_X+Q_s\Phi_Y]\ot\ed\y\\
=&O\big(\sqrt{\e}\|\na\Phi\|^2+(L+\sqrt{\e})\e\|\na^2\Phi\|^2\big).
\end{aligned}
\end{align}
The other terms are easy. We have
\begin{align}\label{c3}
\begin{aligned}
&\z\Phi_X\Delta U_s+\Phi_Y\Delta V_s,\frac{1}{U_s}[\Phi_X+Q_s\Phi_Y]\ot\ed\y=O\big(L^2\|\na\Phi\|^2\big).
\end{aligned}
\end{align}
Collect (\ref{b1})-(\ref{b7}), we obtain
\[
\begin{split}
&\|\nabla[\Phi_X+Q_s\Phi_Y]\sqrt{\ot\ed}\|^2+\|[\Phi_X+Q_s\Phi_Y]\sqrt{\ed}\|^2\\
\lesssim&(L+\se)(\|\na\Phi\|^2+\e\|\na^2\Phi\|^2)+|\z\p_XF_1-\p_YF_2,\frac{1}{U_s}[\Phi_X+Q_s\Phi_Y]\ot\ed\y|\\
\lesssim&(L+\se)(\|G\|^2_{\X}+\|G\|^2_{\Y})+\|F_1\|^2+\|F_2\|^2
\end{split}
\]

So we end the proof.
\end{proof}

{\bf Proof of Proposition \ref{Prop}:}\\
Notice that when $Y\leqslant \se$, $V_s\lesssim\se U_s\lesssim\de U_s$, and when $\se\leqslant Y\leqslant2\delta$, $V_s\lesssim \de U_s$. So 
\[
\begin{split}
\z V_sU_sG_X,G_Y\y=&\z U_sG_X,V_sG_Y\ed\y+\z V_sU_sG_X,G_Y(1-\ed)\y\\
=&-\z U_s^2G_X,G_X\ed\y+\z U_s^2G_X,(G_X+\frac{V_s}{U_s}G_Y)\ed\y+O\big(\de \|U_sG_X\|^2+\de\|U_sG_Y\|^2\big).
\end{split}
\]
According to Lemma \ref{non-shear},
\[
\begin{split}
\z U_s^2G_X,(G_X+\frac{V_s}{U_s}G_Y)\ed\y=&\z \Phi_X-U_{sX}G,(U_sG_X+V_sG_Y)\ed\y\\
=&\z \Phi_X,(U_sG_X+V_sG_Y)\ed\y+O\big(\|G\|\|\na G\|\big)\\
=&\z \Phi_X,(\Phi_X+\frac{V_s}{U_s}\Phi_Y)\ed\y-\z\Phi_X,(U_{sX}G+\frac{V_s}{U_s}U_{sY}G)\ed\y+O\big(L\|\na G\|^2\big)\\
\geqslant &-\frac{1}{4}\|\Phi_X\|^2-\|(\Phi_X+\frac{V_s}{U_s}\Phi_Y)\ed\|^2+O\big(L\|\na G\|^2\big)\\
\geqslant &-\frac{1}{4}\|U_sG_X\|^2+O\big((L+\se)(\|G\|^2_{\X}+\|G\|^2_{\Y})+(\|F_1\|^2+\|F_2\|^2)\big).
\end{split}
\]
So, we obtain
\[
\begin{split}
\z V_sU_sG_X,G_Y\y=&\z U_sG_X,V_sG_Y\ed\y+\z V_sU_sG_X,G_Y(1-\ed)\y\\
\geqslant&-\frac{5}{4}\z U_s^2G_X,G_X\ed\y+O\big(\de \|U_sG_X\|^2+\de\|U_sG_Y\|^2\big)\\
 &+O\big((L+\se)(\|G\|^2_{\X}+\|G\|^2_{\Y})+(\|F_1\|^2+\|F_2\|^2)\big).
\end{split}
\]
Combine above inequality and Lemma \ref{important1}, we have
\[
\begin{split}
&\frac{1}{4}\z U_s^2G_X,G_X\y+\frac{1}{2}\z U_s^2G_Y,G_Y\y+O\big(\de \|U_sG_X\|^2+\de\|U_sG_Y\|^2\big)\\
&\lesssim (L+\se)(\|G\|_{\X}^2+\|G\|_{\Y}^2)+|\z\partial_YF_1-\partial_XF_2,G\omega\y|+(\|F_1\|^2+\|F_2\|^2).
\end{split}
\]
We can select $\de\geqslant L^\frac{1}{2}+\e^\frac{1}{4}$ is a small enough, then, 
\[
\begin{split}
\|G\|_\X^2&\lesssim(L+\se)\|G\|_\Y^2+|\z\partial_YF_1-\partial_XF_2,G\omega\y|+(\|F_1\|^2+\|F_2\|^2).
\end{split}
\]
By Lemma \ref{routine},
\[
\begin{split}
\|G\|_\Y^2\lesssim\|G\|_\X^2+|\z\partial_YF_1-\partial_XF_2,G\y|.
\end{split}
\]
Combine the above two equalities, because $L+\se$ is small,
\[
\begin{split}
\|G\|_\X^2+\|G\|_\Y^2\lesssim&\|G\|_\X^2+|\z\partial_YF_1-\partial_XF_2,G\y|\\
                                    \lesssim&(L+\se)\|G\|_\Y^2+\z\partial_YF_1-\partial_XF_2,G\omega\y|\\
                                    &+|\z\partial_YF_1-\partial_XF_2,G\y|+\|F_1\|^2+\|F_2\|^2\\
                                    \lesssim&(\|F_1\|+\|F_2\|)(\|G_Y\|+\|G_X\|)+\|F_1\|^2+\|F_2\|^2\\
                                    \lesssim&(\|F_1\|+\|F_2\|)(\|G\|_\X+\|G\|_\Y)+\|F_1\|^2+\|F_2\|^2.
\end{split}
\]
It is easy to see
\[
\|\se\Phi_{XX},\se\Phi_{XY},\se\Phi_{YY},\Phi_X,\Phi_Y\|\lesssim\|G\|_\X+\|G\|_\Y\lesssim\|F_1\|+\|F_2\|.
\]
Then we obtain the proof.
\qed

\section{Proof of the main Theorems}
{\bf Proof of Theorem \ref{main}:}
Let $\Us=[U_s,V_s]$, $\U=[U,V]$ and $\RE=[R_1,R_2]$, now we write Navier-Stokes equation in the following form
\begin{equation}\label{U-NS}
\left\{
\begin{aligned}
&-\e\Delta \U+\Us\cdot\nabla\U+\U\cdot\nabla\Us+\U\cdot\nabla \U+\nabla P=-\RE, \\
&\nabla \cdot \U=0,\hspace{5mm} \U|_\Omega=0.
\end{aligned}
\right.
\end{equation}
We use the method of contraction mapping. Define
\[
\begin{split}
\|\U\|_\Z:=\|\U\|+\se\|\nabla\U\|+\e^\frac{3}{2}\|\nabla^2\U\|.
\end{split}
\]
We denote $\T:W^{2,2}(\Omega)\rightarrow W^{2,2}(\Omega)$ as this way, $\T(\U)=\W$ where $\W$ is given by
\begin{equation}
\left\{
\begin{aligned}
&-\e\Delta \W+\Us\cdot\nabla\W+\W\cdot\nabla\Us+\nabla P=-\RE-\U\cdot\nabla\U, \\
&\nabla \cdot \W=0,\hspace{5mm} \W|_\Omega=0.
\end{aligned}
\right.
\end{equation}
Let $B=\{\U\in W^{2,2}(\Omega):\|\U\|_\Z\leqslant C_0\e^{\frac{3}{2}}\}$, $C_0$ is chosen latter. Next we prove $\T$ is a contractive mapping in $B$, if $\|\RE\|\leqslant C_1\e^{\frac{3}{2}}$. We write $\F=-\RE-\U\cdot\nabla \U$, from Proposition \ref{Prop},
\[
\begin{split}
\|\W\|+\se\|\nabla\W\|\lesssim\|\F\|.
\end{split}
\]
Due to the $W^{2,2}$ estimate of Stokes equations in convex polygon in \cite{KE},
\[
\begin{split}
\e\|\nabla^2\W\|\lesssim\|\F\|+\|\nabla\W\|+\frac{1}{\se}\|\W\|\lesssim\frac{1}{\se}\|\F\|.
\end{split}
\]
So we get
\[
\begin{split}
\|\W\|_\Z\leqslant C_2\|\F\|.
\end{split}
\]
It's easy to see
\[
\begin{split}
\|\U\cdot\nabla \U\|\lesssim\|\U\|_{L^\infty}\|\nabla\U\|\lesssim\|\U\|^{\frac{1}{4}}\|\nabla\U\|^{\frac{3}{2}}\|\nabla^2\U\|^{\frac{1}{4}}\lesssim\e^{-\frac{9}{8}}\|\U\|^2_\Z.
\end{split}
\]
It implies
\[
\begin{split}
\|\W\|_\Z\leqslant C_2\|\F\|+C_3\e^{-\frac{9}{8}}\|\U\|^2_\Z\leqslant (C_1C_2+C_3C^2_0\e^\frac{3}{8})\e^{\frac{3}{2}}.
\end{split}
\]
Select $C_0=C_1C_2+1$, $\T(B)\subset B$ when $\e$ is small enough. And if $\U_1, \U_2\in B$,
\[
\begin{split}
\|\T(\U_1-\U_2)\|_\Z&\leqslant C_2\|\U_1\cdot\nabla\U_1-\U_2\cdot\nabla\U_2\|\\
                    &\leqslant C_2\|(\U_1-\U_2)\cdot\nabla\U_1\|+\|\U_2\cdot\nabla(\U_1-\U_2)\|\\
                    &\leqslant C_2\|(\U_1-\U_2)\|_\infty\|\nabla\U_1\|+\|\U_2\|_\infty\|\nabla(\U_1-\U_2)\|\\
                    &\leqslant C_3\e^{-\frac{9}{8}}(\|\U_1\|_\Z+\|\U_2\|_\Z)\|\U_1-\U_2\|_\Z\\
                    &\leqslant 2C_0C_3\e^\frac{3}{8}\|\U_1-\U_2\|_\Z,
\end{split}
\]
so $\T$ is a contraction mapping on $B$ when $\e$ is small enough, we can conclude equations (\ref{U-NS}) admits a solution and
\[
\begin{split}
\|\U\|_{L^\infty}\lesssim\e^{-\frac{5}{8}}\|\U\|_\Z\lesssim\e^{\frac{7}{8}}.
\end{split}
\]
So we have
\[
\begin{split}
 &|U^\e(X,Y)-u^0_e(X,Y)-u^0_b(X,\frac{Y}{\se})|\\
=&|\se u^1_e(X,Y)+\se u^1_b(X,\frac{Y}{\se})+\e u^2_e(X,Y)+\e\hat u^2_b(X,\frac{Y}{\se})+U(X,Y)|\\
\lesssim&\se,\\
 &|V^\e(X,Y)-v^0_e(X,Y)|\\
=&|\se v^0_b(X,\frac{Y}{\se})+\se v^1_e(X,Y)+\e v^1_b(X,\frac{Y}{\se})+\e v^2_e(X,Y)+\e^\frac{3}{2}\hat{v}^2_b(X,\frac{Y}{\se})+V(X,Y)|\\
\lesssim&\se,
\end{split}
\]
which ends the proof.
\qed

\part*{Appendix}
The high-order approximate solutions are constructed similarly in \cite{GZ} even though the leading Euler flow is non-shear. By the method of asymptotic matching expansions, we can deduce the equations of $[u^j_e,v^j_e]$ and $[u^j_b,v^j_b]$, $j=1,2.$
The first order Euler profile $[u^1_e,v^1_e,p^1_e]$ solves the linearized Euler equations around $[u^0_e,v^0_e]$:
\begin{align} \label{euler1}
\left\{
\begin{aligned}
&u^0_eu^1_{eX}+u^0_{eX}u^1_e+v^0_eu^1_{eY}+u^0_{eY}v^1_e+p^1_{eX}=0,\\
&u^0_ev^1_{eX}+v^0_{eX}u^1_e+v^0_ev^1_{eY}+v^0_{eY}v^1_e+p^1_{eY}=0,\\
&\p_X u^1_e + \p_Y v^1_e = 0,\\
&v^1_e|_{Y=0}=-v^0_b|_{y=0}. \\
\end{aligned}
\right.
\end{align}
We eliminate the pressure $p^1_e$, 
\begin{align} \label{stream1}
\left\{
\begin{aligned}
&v^0_e\Delta u^1_{e}-u^0_e\Delta v^1_e+v^1_e\Delta u^0_{e}-u^1_e\Delta v^0_e=0,\\
&\p_X u^1_e + \p_Y v^1_e = 0,\\
&v^1_e|_{Y=0}=-v^0_b|_{y=0}. \\
\end{aligned}
\right.
\end{align}
We introduce independent variables by
\begin{align}\label{vari change}
\begin{aligned}
\theta(X,Y)=X, \hspace{5 mm} \psi(X,Y)=\int_0^Yu^0_e(X,Y')\dd Y'.
\end{aligned}
\end{align}
Let $\psi^1$ be the stream function of $[u^1_e,v^1_e]$
\[
\begin{split}
     \psi^1(X,Y):=\int_0^Yu^1_e(X,Y')\dd Y'-\int_0^Xv^1_e(X',0)\dd X',\hspace{5 mm} \psi^1_Y=u^1_e, \hspace{5 mm} \psi^1_X=-v^1_e,
\end{split}
\]
then first Euler layer equations (\ref{euler1}) are equivalent to
\begin{align} \label{stream-euler1}
\begin{aligned}
     \p_\theta[\Delta_{XY} \psi^1-F_e'(\psi)\psi^1]=0,
\end{aligned}
\end{align}
where $F_e$ is a smooth function in (\ref{F-Euler}). We try to find a solution of the following equations
\begin{align} \label{stream-euler1-bry}
\left\{
\begin{aligned}
     &\Delta \psi^1-F_e'(\psi)\psi^1=0,\\
     &\psi^1|_{X=0}=\psi^1_0(Y),\hspace{3 mm}\psi^1|_{X=L}=\psi^1_L(Y),\\
     &\psi^1|_{Y=0}=\int_0^Xv^0_b(X',0)\dd X',\hspace{3 mm}\psi^1|_{Y\rightarrow\infty}=0.
\end{aligned}
\right.
\end{align}
It is a standard elliptic problem, we have the following result.
\begin{lemma}\label{LEuler}
If $v^0_b$ is a smooth functions, for any $L>0$, if $\psi^1_0(Y)$, $\psi^1_L(Y)$ satisfy the 
compatibility conditions on the corner, then the equations (\ref{stream-euler1-bry}) admit a unique solution satisfying the following estimate
\begin{align}
\|\z Y\y^M\nabla^k\psi^1\|\lesssim1,\hspace{1mm}\text{  for  } 1\leqslant k \leqslant K, K \text{ and } M \text{ are large constants }.
\end{align}
\end{lemma}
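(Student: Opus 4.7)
The plan is to treat (\ref{stream-euler1-bry}) as a standard linear elliptic boundary value problem on the half-strip $(0,L)\times(0,\infty)$ and then upgrade basic existence to the claimed weighted derivative estimate. First, I would construct a smooth extension $\chi(X,Y)$ matching the prescribed boundary data on $X=0$, $X=L$ and $Y=0$---this is possible thanks to the assumed corner compatibility conditions on $\psi^1_0,\psi^1_L$---and arrange that $\chi$ together with all of its derivatives decays rapidly as $Y\to\infty$. Setting $w=\psi^1-\chi$ reduces the problem to
\begin{equation*}
\Delta w - F_e'(\psi)w = g \quad\text{on } (0,L)\times(0,\infty), \qquad w|_{\partial\Omega}=0,
\end{equation*}
where $g:=-\Delta\chi+F_e'(\psi)\chi$ is smooth and rapidly decaying in $Y$.

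For existence and uniqueness, recall from (\ref{Eul profile}) and (\ref{F-Euler}) that $F_e$ and its derivatives are bounded and decaying in the argument. If $F_e'(\psi)\ge 0$, the bilinear form $a(u,v)=\int(\nabla u\cdot\nabla v+F_e'(\psi)uv)$ is coercive on $H^1_0$ of the half-strip (using the Poincar\'e inequality in the $X$-variable) and Lax--Milgram applies directly. In general one invokes the Fredholm alternative: the compact lower-order perturbation reduces everything to ruling out a nontrivial homogeneous solution, which follows from a unique continuation / maximum principle argument since $w$ vanishes on the lateral boundary and decays at infinity. Standard interior and boundary elliptic regularity, combined with the corner compatibility of the data, then upgrades $w$ to $C^\infty$ up to the boundary of the half-strip.

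For the weighted estimate I would run a weighted energy argument. Testing the equation for $w$ against $\z Y\y^{2M}w$ and integrating by parts yields
\begin{equation*}
\int\z Y\y^{2M}|\nabla w|^2 + \int \z Y\y^{2M} F_e'(\psi) w^2 \lesssim \int\z Y\y^{2M-1}|\nabla w||w| + \int\z Y\y^{2M}|g||w|.
\end{equation*}
The weighted commutator term is absorbed into the principal part (by a Cauchy--Schwarz, with a small induction on $M$ if $F_e'(\psi)$ changes sign), and since both $g$ and $\chi$ decay fast in $Y$ the right-hand side is finite. Differentiating the equation in $X$ and $Y$ and repeating the argument with test function $\z Y\y^{2M}\partial^\alpha w$, together with elliptic regularity on bounded-height sub-strips, upgrades the bound to all derivatives and gives $\|\z Y\y^M\nabla^k\psi^1\|\lesssim 1$ for $1\le k\le K$.

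The main obstacle I anticipate is the uniqueness/coercivity issue when $F_e'(\psi)$ is not sign-definite: one must reduce to a Fredholm framework and exclude a kernel element for the operator $-\Delta+F_e'(\psi)$ on this unbounded domain, which needs a decay-at-infinity statement and either a maximum principle or a unique-continuation argument. Once that is in hand, the propagation of rapid $Y$-decay by weighted energy identities is routine given the boundedness and decay assumptions on $F_e'$ and on the prescribed boundary data.
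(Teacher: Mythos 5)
Your proposal identifies the right obstacle but does not actually overcome it, whereas the paper does so with a specific structural trick you miss. You correctly note that the potential $F_e'(\psi)$ need not be sign-definite, so Lax--Milgram does not apply directly, and you propose falling back on a Fredholm alternative plus a "unique continuation / maximum-principle" argument to exclude a kernel. That step is a genuine gap: on the half-strip $(0,L)\times(0,\infty)$ the embedding $H^1_0\hookrightarrow L^2$ is not compact (so Fredholm needs extra work exploiting decay of $F_e'(\psi)$), and more seriously, when $F_e'(\psi)$ changes sign the maximum principle simply does not rule out a nontrivial homogeneous solution, while unique continuation from Dirichlet data alone (without a matching Neumann condition on a boundary portion) does not either. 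You flagged this as "the main obstacle I anticipate" but left it unresolved.

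The paper sidesteps the entire Fredholm/kernel question with a ground-state transform: since $\Delta\psi=F_e(\psi)$, differentiating in $Y$ shows that $u^0_e=\psi_Y$ is a strictly positive, bounded-above-and-below solution of the homogeneous equation $\Delta u^0_e = F_e'(\psi)\,u^0_e$. Writing $w=\tilde\psi/u^0_e$ and multiplying through by $u^0_e$ converts the indefinite Schr\"odinger-type equation into the manifestly coercive divergence-form equation
\begin{equation*}
\partial_X\big[(u^0_e)^2 w_X\big]+\partial_Y\big[(u^0_e)^2 w_Y\big]=u^0_e\,\tilde F,
\qquad w|_{\partial\Omega}=0,
\end{equation*}
whose bilinear form is coercive on $H^1_0$ because $0<c_0\le u^0_e\le C_0$; testing against $w$ and using Poincar\'e in $X$ gives $\|\nabla w\|\lesssim\|\tilde F\|$, hence existence, uniqueness, and the base estimate in one stroke. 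The higher-order weighted bounds then follow by induction exactly as you sketch. So your weighted-energy machinery for the decay estimate is fine, but the existence/uniqueness part of your argument would not close without the positive-ground-state substitution that removes the sign-indefinite potential.
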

\begin{proof}

We homogenize the boundary conditions of system (\ref{stream-euler1-bry}). Let
$$\tilde{\psi}=\psi^1-\frac{L-x}{L}\psi^1_0(Y)-\frac{x}{L}[\psi^1_L(Y)-\chi(Y)\int_0^Lv^{0}_b(X',0)\dd X']-\chi(Y)\int_0^Xv^{0}_b(X',0)\dd X',$$
here $\chi(Y)$ is a nonnegative smooth cut-off function, $\chi|_{[0,1]}=1$ and $\chi|_{[2,\infty]}=0$.  $\td{\psi}$ satisfies
\begin{align} \label{b euler}
\left\{
\begin{aligned}
     &\Delta \tilde{\psi}-F_e'(\psi)\tilde{\psi}=\tilde{F},\\
     &\tilde{\psi}|_{\p \Omega}=0.
\end{aligned}
\right.
\end{align}
Notice that 
$$\Delta\psi=F_e(\psi),$$ 
so $0<c_0\leqslant u_e^0=\psi_Y\leqslant C_0<\infty$ satisfies 
$$\Delta u^0_e=F'_e(\psi) u^0_e.$$
Let $w=\frac{\td{\psi}}{u^0_e}$, then 
$$
u^0_e\Delta w+2\nabla u^0_e\cdot\nabla w=\td{F}.
$$
Above equation times $u^0_e$, we show the equation (\ref{b euler}) is equivalent to 
\begin{align} \label{b imp}
\left\{
\begin{aligned}
     &\p_X[(u^0_e)^2w_X]+\p_Y[(u^0_e)^2w_Y]=u^0_e \tilde{F},\\
     &w|_{\p \Omega}=0.
\end{aligned}
\right. 
\end{align}
We can easily get a prior estimates of equation (\ref{b imp}). Multiply equation (\ref{b imp}) by $w$ and integrate in $\Omega$,
$$
\|u^0_e w_X\|^2+\|u^0_e w_Y\|^2=-\z w,u^0_e\td{F}\y\lesssim\|w\|\|F\|\lesssim\|w_X\|\|F\|.
$$
So we have 
\begin{align}\label{wuqu b}
\|\nabla w\|\lesssim\|\td{F}\|.
\end{align}
The equality (\ref{wuqu b}) actually shows the existence of solution about equation (\ref{b imp}). Moreover, if $\td{F}$ is a smooth function decaying fast when $Y\rightarrow\infty$, we can obtain the following estimate by the mathematical induction method: 
\begin{align}
\|\z Y\y^M\nabla^k w\|\lesssim1,\hspace{1mm}\text{  for  } 1\leqslant k \leqslant K, K \text{ and } M \text{ are large constants }.
\end{align}
So Lemma \ref{LEuler} is right.
\end{proof}

\begin{remark}
The boundary conditions of $\psi^1$ in (\ref{stream-euler1-bry}) imply the following boundary conditions of $[u^1_e,v^1_e]$
\begin{align}\label{bry of euler1}
\begin{aligned}
&u^1_e|_{X=0}=\p_Y\psi^1_0(Y),\hspace{3 mm}u^1_e|_{X=L}=\p_Y\psi^1_L(Y),\\
&v^1_e|_{Y=0}=-v^0_b(X,0),\hspace{3 mm}[u^1_e,v^1_e]|_{Y\rightarrow\infty}=0.
\end{aligned}
\end{align}
So we actually constructed a solution $[u^1_e,v^1_e]$ to equations (\ref{euler1}) with boundary conditions (\ref{bry of euler1}).
\end{remark}

Next we need to solve the first order boundary layer profile. For simplicity, we introduce some notations.
\begin{equation}
\begin{aligned}
&u^k_p:=u^k_b+\sum_{j=0}^k\frac{y^j}{j!}\p^j_Yu^{k-j}_e|_{Y=0},\hspace{5 mm} u^{(k)}_e:=\sum_{j=0}^k\frac{y^j}{j!}\p^j_Yu^{k-j}_e|_{Y=0},\\
&v^k_p:=v^k_b-v^k_b|_{y=0}+\sum_{j=0}^{k}\frac{y^{j+1}}{(j+1)!}\p^{j+1}_Yv^{k-j}_e|_{Y=0},\hspace{5 mm} v^{(k)}_e:=\sum_{j=0}^{k}\frac{y^{j+1}}{(j+1)!}\p^{j+1}_Yv^{k-j}_e|_{Y=0}.
\end{aligned}
\end{equation}
And $[u^1_b,v^1_b,p^1_b]$ solves the linearized Prandtl's equations around $[u^0_p,v^0_p]$:
\begin{align} \label{prandtl1}
\left
\{
\begin{aligned}
& u^0_p \p_x u^1_b + u^1_b \p_x u^0_p + \p_y u^0_p [v^1_b - v^1_b|_{y = 0}] + v^0_p \p_y u^1_b- \p_{yy} u^1_b + \p_x p^1_b = f^{(1)}, \\
& \p_y p^1_b = 0,\\
& \p_x u^1_b + \p_y v^1_b = 0,\\
& u^1_b|_{y = 0} = -u^1_e|_{Y = 0}, \hspace{5 mm} [u^1_b, v^1_b]|_{y \rightarrow \infty} = 0,
\end{aligned}
\right.
\end{align}
where
\begin{equation}
\begin{aligned}
f^{(1)}=&-\{u^0_bu^{(1)}_{ex}+u^0_{bx}u^{(1)}_e+v^0_b\p_yu^{(1)}_e+u^0_{by}v^{(1)}_e\}.
\end{aligned}
\end{equation}
We see that $f^{(1)}$ decays fast when $y\rightarrow\infty$ from Lemma \ref{pra0}. Since that above equations are linear parabolic type equations, we add the boundary condition on $u^1_b|_{x=0}$.
\begin{align} \label{prandtl1 bry}
\left
\{
\begin{aligned}
& u^0_p \p_x u^1_b + u^1_b \p_x u^0_p  + v^0_p \p_y u^1_b+ [v^1_b - v^1_b|_{y = 0}]\p_y u^0_p - \p_{yy} u^1_b+ \p_x p^1_b = f^{(1)}, \\
& \p_y p^1_b = 0,\\
& \p_x u^1_b + \p_y v^1_b = 0,\\
& u^1_b|_{x = 0}=U^1_B, \hspace{5 mm} u^1_b|_{y = 0} = -u^1_e|_{Y = 0}, \hspace{5 mm} [u^1_b, v^1_b]|_{y \rightarrow \infty} = 0.
\end{aligned}
\right.
\end{align}
Iyer in \cite{GI-H} proved the well-posedness of above system when $v^0_e=0$. In our case, $v^0_p$ is different because $v^0_p\sim yv^0_{eY}(x,0)$ as $y$ goes to $\infty$, still we have
\begin{lemma}\label{pra1}
If $U^1_B$, $f^{(1)}$ and their derivatives are bounded and decaying rapidly, they satisfy the parabolic compatibility conditions, then equations (\ref{prandtl1 bry}) admit a unique solution $[u^1_b,v^1_b]$, and
\begin{align}
\| \z y\y^M\nabla^k u^1_b\|_\infty+\| \z y\y^M\nabla^k v^1_b\|_\infty \lesssim 1  \hspace{2mm}\text{  for  }\hspace{2mm} 0\leqslant k \leqslant K,
\end{align}
where $K$ and $M$ are large constants.
\end{lemma}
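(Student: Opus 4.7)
The plan is to adapt Iyer's well-posedness theory for the linearized Prandtl system in \cite{GI-H}, where the background transverse velocity $v^0_p$ is bounded, to the present setting in which $v^0_p \sim y\,v^0_{eY}(x,0)$ grows linearly at infinity. The argument proceeds in three stages: (i) reduce (\ref{prandtl1 bry}) to a single scalar equation for $u^1_b$ with homogeneous trace at $y=0$; (ii) establish weighted $L^2$ energy estimates at every differentiation order, paying careful attention to the growth of $v^0_p$; (iii) obtain existence by a Galerkin-type scheme marching in $x$ in the style of Oleinik, and upgrade the weighted $L^2$ bounds to weighted $L^\infty$ bounds via Sobolev embedding in $y$.

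\textbf{Reformulation.} Since $\partial_y p^1_b=0$, the pressure is a function of $x$ only. Letting $y\to\infty$ in the first equation and using the assumed decay of $u^1_b,\partial_y u^1_b,\partial_{yy}u^1_b$, the exponential decay of $\partial_y u^0_p$, the decay of $f^{(1)}$ inherited from the decay of $u^0_b,v^0_b,u^0_{by}$, and the fact that any polynomial decay of $\partial_y u^1_b$ beats the linear growth of $v^0_p$, one obtains $\partial_x p^1_b = 0$; normalize $p^1_b\equiv 0$. The divergence-free relation and $v^1_b|_{y\to\infty}=0$ give $v^1_b(x,y) = -\int_y^\infty \partial_x u^1_b(x,y')\,\mathrm{d}y'$, hence $v^1_b-v^1_b|_{y=0} = -\int_0^y \partial_x u^1_b(x,y')\,\mathrm{d}y'$. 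The inhomogeneous Dirichlet datum at $y=0$ is removed by setting $\bar u := u^1_b + u^1_e|_{Y=0}\,\chi(y)$ with a smooth compactly supported cutoff $\chi$ with $\chi(0)=1$; this transfers a smooth, fast-decaying contribution into the source term. The resulting scalar nonlocal equation for $\bar u$ has the form
\begin{equation*}
u^0_p\,\partial_x \bar u + \bar u\,\partial_x u^0_p + v^0_p\,\partial_y \bar u - \partial_y u^0_p\!\int_0^y\partial_x \bar u\,\mathrm{d}y' - \partial_{yy}\bar u = \tilde f,
\end{equation*}
with $\bar u|_{y=0}=0$, $\bar u|_{y\to\infty}=0$, $\bar u|_{x=0}$ prescribed, and $\tilde f$ smooth and rapidly decaying in $y$.

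\textbf{Energy estimate.} Test the above equation against $\bar u\,\langle y\rangle^{2M}$ and integrate over $(0,x_0)\times(0,\infty)$. The term $\bar u\,\partial_x u^0_p\cdot \bar u$ is harmless. The transport term is handled by
\begin{equation*}
\int v^0_p\,\partial_y\bar u\cdot\bar u\,\langle y\rangle^{2M}\,\mathrm{d}y \;=\; -\tfrac{1}{2}\!\int \bar u^2\,\partial_y\!\bigl(v^0_p\langle y\rangle^{2M}\bigr)\,\mathrm{d}y,
\end{equation*}
and the Prandtl divergence-free identity $\partial_y v^0_p = -\partial_x u^0_p$, together with $v^0_p/\langle y\rangle\lesssim 1$, shows $\langle y\rangle^{-2M}\partial_y(v^0_p\langle y\rangle^{2M})$ is uniformly bounded, so the bad term is absorbed as a lower-order $L^2$ weighted contribution. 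The nonlocal term is rewritten back in terms of $v^1_b - v^1_b|_{y=0}$ and integrated by parts using the div-free relation, which cancels against a portion of the transport term up to a commutator controlled by $\|\partial_y u^0_p\|_\infty$ and $\|y\,\partial_y u^0_p\|_\infty$. The top (principal) term $-\partial_{yy}\bar u\cdot \bar u\langle y\rangle^{2M}$ yields the coercive $\|\partial_y\bar u\cdot\langle y\rangle^M\|^2$ plus a commutator bounded by $M^2\|\bar u\langle y\rangle^{M-1}\|^2$. Assembling these, and using smallness of $L$ plus a Gronwall inequality in $x$, we obtain
\begin{equation*}
\sup_{0\leqslant x\leqslant L}\|\langle y\rangle^M\bar u(x,\cdot)\|_{L^2_y}^2 + \|\langle y\rangle^M\partial_y\bar u\|^2 \;\lesssim\; \|\langle y\rangle^M U^1_B\|_{L^2_y}^2 + \|\langle y\rangle^M\tilde f\|^2.
\end{equation*}
Differentiating the equation in $x$ and $y$ and repeating the estimate at each order (using Lemma \ref{pra0} bounds on the background) yields the weighted $H^k$ bounds; Sobolev embedding in $y$ upgrades them to the weighted $L^\infty$ estimate in the statement, and the corresponding bound for $v^1_b$ follows from $v^1_b = -\int_y^\infty\partial_x u^1_b\,\mathrm{d}y'$ together with a decay order absorbed from the weight.

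\textbf{Main obstacle and existence.} The main technical obstacle is precisely the linear growth of $v^0_p$: naively, $v^0_p\partial_y\bar u$ is not in $L^2$ weighted by $\langle y\rangle^{2M}$, and a straightforward weighted estimate would pick up a term of order $M\langle y\rangle^{2M}$ that defeats the dissipation. The cancellation $\partial_y v^0_p = -\partial_x u^0_p$ plus the divergence-free identity $u^0_{ex}(x,0) + v^0_{eY}(x,0) = 0$ for the outer Euler flow is what makes $\langle y\rangle^{-2M}\partial_y(v^0_p\langle y\rangle^{2M})$ actually bounded, which is the key structural observation not needed in \cite{GI-H}. Once these a priori bounds are in hand, existence follows by a Galerkin approximation in $y$ on each slice $x=\mathrm{const}$ combined with parabolic $x$-marching on the regularized problem (replacing $u^0_p$ by $u^0_p+\sigma$ with $\sigma\downarrow 0$ at the end), exactly as in Oleinik's proof of Proposition \ref{Oleinik}; uniqueness is immediate from the linear energy estimate applied to the difference of two solutions.
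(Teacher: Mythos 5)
Your identification of the main obstacle --- the linear growth of $v^0_p$ as $y\to\infty$, absent from Guo--Iyer's setting --- is correct, and the structural cancellation you isolate, that $\langle y\rangle^{-2M}\partial_y\big(v^0_p\langle y\rangle^{2M}\big)$ is bounded thanks to $\partial_y v^0_p = -\partial_x u^0_p$ and $v^0_p/\langle y\rangle \lesssim 1$, is precisely the fact the paper exploits (it appears in the bound $\|\frac{(\vb\ub\rho^2)_y}{\rho^2}\|_{L^\infty}\lesssim 1$ in the third-term estimate). That part of your proposal is sound.

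However, there is a genuine gap in your treatment of the nonlocal term $[v^1_b - v^1_b|_{y=0}]\,\partial_y u^0_p = -\partial_y u^0_p\int_0^y \partial_x u^1_b\,\mathrm{d}y'$. You propose to test the scalar equation for $\bar u$ against $\bar u\langle y\rangle^{2M}$ and claim the nonlocal contribution ``cancels against a portion of the transport term up to a commutator controlled by $\|\partial_y u^0_p\|_\infty$.'' That cancellation does not happen at this level: after one integration by parts in $y$ the term still contains $\int_0^\infty u^0_p\Big(\int_0^y\partial_x\bar u\Big)\partial_y\big(\bar u\langle y\rangle^{2M}\big)\,\mathrm{d}y$, which involves the full $x$-derivative of the unknown and therefore loses a derivative relative to what the zeroth-order energy controls. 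This derivative loss is the classical difficulty of linearized Prandtl and is not a minor commutator. The paper's mechanism for resolving it is structural, not an integration-by-parts trick: it passes to the stream function $\phi$ and then to the \emph{quotient} $g = \phi/u^0_p$, under which the nonlocal operator $\phi_x u^0_{py}$ is absorbed exactly into the divergence-form first term $\partial_x[(u^0_p)^2 g_y]$, leaving only a local term $u^0_p u^0_{pxy} g = -u^0_p v^0_{pyy} g$ (equation (\ref{g})). The energy is then taken by pairing with $g_y\rho^2$, giving positivity at the final slice from $\|u^0_p g_y\rho\|_{L^2_y(x=x_0)}^2$ and dissipation from $\|\sqrt{u^0_p}\,g_{yy}\rho\|^2$, with the growing $v^0_p$ entering only through the bounded combination $(\vb\ub\rho^2)_y/\rho^2$. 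Your observation about the growth of $v^0_p$ is necessary but not sufficient; without the quotient reformulation your energy estimate does not close, and a Galerkin/marching scheme on a non-closing a priori estimate cannot deliver existence. To repair the proposal you would need to explicitly introduce $g=\phi/u^0_p$ (or an equivalent von Mises-type change of variables) and carry out the estimate in the $\Xi_0$, $\Xi_1$ framework before appealing to the weighted $L^\infty$ upgrade.
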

\begin{proof}
For convenience, we write $[\ub,\vb]:=[u^0_p,v^0_p]$ and we homogenize the system (\ref{prandtl1 bry}) as the following way:
\begin{align}
\begin{aligned}
&u(x,y)=u^1_b(x,y)+u^1_e(x,0)\eta(y),\\
&v(x,y)=v^1_b(x,y)-v^1_b(x,0)+u^1_{eX}(x,0)I_\eta(y), \\
&I_\eta(y):=\int_y^\infty\eta(y')dy'.
\end{aligned}
\end{align}
Here, we select $\eta$ to be a $C^\infty$ function satisfying the following:
\begin{align}
\eta(0)=1,\hspace{5mm} \int_0^\infty\eta=0,\hspace{5mm} \eta\text{ decays fast as }y\rightarrow\infty.
\end{align}
Due to (\ref{prandtl1 bry}), the homogenized unknowns $[u,v]$ satisfy the system
\begin{align}
\left
\{
\begin{aligned}
& \ub \p_x u + u \p_x \ub + \vb \p_y u + v\p_y \ub - \p_{yy} u +p_x= f^{(1)}+F=: h, \\
& p_y=0,\\
& \p_x u + \p_y v = 0,\\
& u|_{x = 0}=U^1_B+u^1_e(0,0)\eta(y)=:u_0(y), \hspace{3 mm} [u,v]|_{y = 0} =0, \hspace{3 mm} u|_{y \rightarrow \infty} = 0,
\end{aligned}
\right.
\end{align}
where
\begin{align}
F=\ub u^1_{eX}(x,0)\eta+\ub_xu^1_e(x,0)\eta+\vb u^1_e(x,0)\eta'+\ub_y u^1_{eX}(x,0)I_\eta-u^1_e(x,0)\eta''.
\end{align}
Notice that $p$ is independent on $y$, we evaluate the equation as $y\rightarrow\infty$, we have $p_x=0$.
We still using the stream-function of $[u,v]$,
\begin{align}
\phi(x,y):=\int_0^y u(x,y')dy',\hspace{5 mm} \p_y\phi=u, \hspace{5 mm}\p_x\phi=-v.
\end{align}
Then $\phi$ satisfies
\begin{align}\label{phi}
\left
\{
\begin{aligned}
& \ub \phi_{xy} +\ub_x\phi_{y}+ \vb \phi_{yy}- \phi_x \ub_{y} - \phi_{yyy}  =  h, \\
& \phi|_{x = 0}=\int_0^yu_0(y')dy', \hspace{3 mm} \phi|_{y = 0} =\phi_y|_{y = 0}=0, \hspace{3 mm} \phi_y|_{y \rightarrow \infty} = 0.
\end{aligned}
\right.
\end{align}
In order to give a priori estimate of (\ref{phi}), we denote $g=\frac{\phi}{\ub}$. Recall $\ub\sim y$ when $y\leqslant1$ and $\ub\sim 1$, when $y\geqslant1$, and $\phi|_{y = 0} =\phi_y|_{y = 0}=0$, $g$ is well-defined. And $g$ satisfies
\begin{align}\label{g}
\left
\{
\begin{aligned}
&\p_{x}[\ub^2 g_y] - \p^3_{y}[\ub g]+\vb\p^2_{y}[\ub g]-\ub\vb_{yy}g  =  h, \\
& g|_{x = 0}=\frac{\int_0^yu_0}{\ub}, \hspace{3 mm} g|_{y = 0}=0, \hspace{3 mm} g_y|_{y \rightarrow \infty} = 0.
\end{aligned}
\right.
\end{align}
Now we define the norms of $g$:
\begin{align}\label{norm g}
\begin{aligned}
\|g\|_{\Xi_0}:=\sup_{0\leqslant x_0\leqslant L}\|\ub g_y\rho\fsx+\|\sqrt{\ub} g_{yy}\rho\fs,\\
\|g\|_{\Xi_1}:=\sup_{0\leqslant x_0\leqslant L}\|\ub g_{xy}\frac{\rho}{\z y\y}\fsx+\|\sqrt{\ub} g_{xyy}\frac{\rho}{\z y\y}\fs,
\end{aligned}
\end{align}
here $\rho=\z y\y^N$, for $N$ large constant. Next, let us prove the following priori estimate of $g$.
Suppose $g$ be a smooth solution of (\ref{g}), $L>0$ small enough, then
\begin{align}\label{g est1}
\|g\|_{\Xi_0}^2\lesssim &\|\ub g_y \rho\|_{L^2_y(x=0)}^2+\|h\rho\fs^2,\\\label{g est2}
\|g_x\|_{\Xi_1}^2\lesssim &\|\ub g_{xy} \frac{\rho}{\z y\y}\|_{L^2_y(x=0)}^2+\|g\|_{\Xi_0}^2+\|h_x\frac{\rho}{\z y\y}\fs^2.
\end{align}
Multiply equation (\ref{g}) by $g_y\rho^2$ and integrate in $(0,x_0)\times(0,\infty)$.
\[
\begin{split}
\jfxy[\ub^2 g_y]_x g_y\rho^2\dd x\dd y=&\jfxy \ub^2 g_{xy} g_y\rho^2\dd x\dd y+\jfxy2\ub\ub_x g^2_y\rho^2\dd x\dd y\\
                           =&\frac{1}{2}\|\ub g_y\rho\fsx^2-\frac{1}{2}\|\ub g_y\rho\|^2_{L^2_y(x=0)}+\jfxy\ub\ub_x g^2_y\rho^2\dd x\dd y.
\end{split}
\]
We can dominate $\|g_y\|$ by $\|g\|_{\Xi_0}$. Let $0<\xi\leqslant1$ be a constant being choosing later. $\chi(y)$ is smooth cut-off function, satisfies
$\chi|_{[0,1]}=1$, $\chi|_{[2,\infty]}=0$. Then,
\[
\begin{split}
\|g_y\rho\fs\lesssim\|g_y[1-\chi(\frac{y}{\xi})]\rho\fs+\|g_y\chi(\frac{y}{\xi})\rho\fs.
\end{split}
\]
When $y\leqslant1$, $1-\chi(\frac{y}{\xi})\lesssim\frac{y}{\xi}\lesssim\frac{\ub}{\xi}$, when $y>1$,  $1-\chi(\frac{y}{\xi})\lesssim\ub\lesssim\frac{\ub}{\xi}$. So
\[
\begin{split}
\|g_y[1-\chi(\frac{y}{\xi})]\rho\fs\lesssim\frac{1}{\xi^2}\|\ub g_y\rho\fs^2\lesssim \frac{L}{\xi^2}\|g\|_{\Xi_0}^2.
\end{split}
\]
And
\[
\begin{split}
\|g_y\chi(\frac{y}{\xi})\rho\fs^2=&-\jfxy2 yg_yg_{yy}\chi^2(\frac{y}{\xi})\rho^2 \dd x\dd y-\jfxy\frac{2}{\xi}yg^2_y\chi(\frac{y}{\xi})\chi'(\frac{y}{\xi})\rho^2\dd x\dd y\\
                                   &-\jfxy2 yg^2_y\chi^2(\frac{y}{\xi})\rho\rho_y\dd x\dd y\\
                                       \lesssim &\|y\chi(\frac{y}{\xi})g_{yy}\rho\fs^2+\frac{1}{\xi^2}\|\ub g_y\rho\fs^2\\
                                       \lesssim &\xi\|\sqrt{\ub}g_{yy}\rho\fs^2+\frac{L}{\xi^2}\|g\|_{\Xi_0}^2.
\end{split}
\]
So we have
\[
\begin{split}
\|g_y\rho\fs^2\lesssim \xi\|\sqrt{\ub}g_{yy}\rho\fs^2+\frac{L}{\xi^2}\|g\|_{\Xi_0}^2,
\end{split}
\]
select $\xi=L^\frac{1}{3}$, then
\begin{align}\label{g hardy}
\|g_y\rho\fs^2\lesssim L^\frac{1}{3}\|g\|_{\Xi_0}^2.
\end{align}
So the first term is
\begin{align}\label{g 1}
\jfxy[\ub^2 g_y]_x g_y\rho^2\dd x\dd y=\frac{1}{2}\|\ub g_y\rho\fsx^2-\frac{1}{2}\|\ub g_y\rho\|_{L^2_y(x=0)}^2+O\big(L^\frac{1}{3}\|g\|_{\Xi_0}^2\big).
\end{align}
The second term:
\[
\begin{split}
-\jfxy\p^3_y[\ub g]g_y\rho^2\dd x\dd y=&\jfxy\p^2_y[\ub g]g_{yy}\rho^2\dd x\dd y+\jfxy 2\p^2_y[\ub g]g_{y}\rho_y\rho \dd x\dd y\\
&+\z\p^2_y[\ub g],g_{y}\rho^2\yy.
\end{split}
\]
\[
\begin{split}
\jfxy\p^2_y[\ub g]g_{yy}\rho^2\dd x\dd y=&\jfxy(\ub g_{yy}+2\ub_y g_y+\ub_{yy}g)g_{yy}\rho^2\dd x\dd y\\
                              =&\|\sqrt{\ub}g_{yy}\rho\fs^2-\z\ub_yg_y,g_y\yy+\jfxy(\ub_y\rho^2)_y g^2_y\dd x\dd y\\
                               &-\jfxy\ub_{yy}g_y^2\rho^2\dd x\dd y-\jfxy(\ub_{yy}\rho^2)_yg g_y\dd x\dd y\\
                              =&\|\sqrt{\ub}g_{yy}\rho\fs^2-\z\ub_yg_y,g_y\yy\\
                              &+O\big(\|g_y\rho\fs^2+\|y(\ub_{yy}\rho^2)_y\|_{L^\infty}\|\frac{g}{y}\fs \|g_y\fs\big)\\
                              =&\|\sqrt{\ub}g_{yy}\rho\fs^2-\z\ub_yg_y,g_y\yy+O\big(\|g_y\rho\fs^2\big),
\end{split}
\]
\[
\begin{split}
\jfxy2\p^2_y[\ub g]g_{y}\rho_y\rho \dd x\dd y=&\jfxy\ub (g_y^2)_y \rho_y\rho \dd x\dd y+\jfxy4\ub_yg_y^2\rho_y\rho \dd x\dd y\\
                                    &+\jfxy2\ub_{yy}gg_y\rho_y\rho \dd x\dd y\\
                                   =&O\big(\|g_y\rho\fs^2+\|y\ub_{yy}\rho_y\rho\|_{L^\infty}\|\frac{g}{y}\fs \|g_y\fs\big))\\
                                   =&O\big(\|g_y\rho\fs^2\big),
\end{split}
\]
\[
\begin{split}
\z\p^2_y[\ub g],g_{y}\rho^2\yy=2\z\ub_yg_y,g_y\rho^2\yy.
\end{split}
\]
So the second term is
\begin{align}\label{g 2}
-\jfxy\p^3_y[\ub g]g_y\rho^2\dd x\dd y=\|\sqrt{\ub}g_{yy}\rho\fs^2+\z\ub_yg_y,g_y\rho^2\yy+O\big(L^\frac{1}{3}\|g\|_{\Xi_0}^2\big).
\end{align}
The third term is
\begin{align}\label{g 3}
\begin{aligned}
\jfxy\vb(\ub g)_{yy}g_y\rho^2\dd x\dd y=&\jfxy\vb(\ub g_{yy}+2\ub_y g_y+\ub_{yy}g)g_y\rho^2\dd x\dd y\\
                             =&-\frac{1}{2}\jfxy(\vb\ub\rho^2)_y g^2_y\dd x\dd y+O\big(\|\vb \ub_y \|_{L^\infty}\|g_y\rho\fs^2\\
                              &+\|y\vb\ub_{yy}\rho^2\|_{L^\infty}\|\frac{g}{y}\fs\|g_y\fs\big)\\
                             =&O\big(\|\frac{(\vb\ub\rho^2)_y}{\rho^2} \|_{L^\infty}\|g_y\rho\fs^2+\|g_y\rho\fs^2\big)\\
                             =&O\big(L^\frac{1}{3}\|g\|_{\Xi_0}^2\big).
\end{aligned}
\end{align}
And the last one is
\begin{align}\label{g 4}
\begin{aligned}
\jfxy\vb_{yy}\ub g g_y\rho^2 \dd x\dd y=O\big(\|y\vb_{yy}\ub\rho^2\|_{L^\infty}\|\frac{g}{y}\fs\| g_y\fs\big)=O\big(\|g_y\rho\fs^2\big).
\end{aligned}
\end{align}
Collect (\ref{g 1}), (\ref{g 2}), (\ref{g 3}), (\ref{g 4}), we have
\begin{align}\label{g what?}
\begin{aligned}
&\frac{1}{2}\|\ub g_y\rho\fsx^2+\|\sqrt{\ub}g_{yy}\rho\fs^2+\z\ub_yg_y,g_y\rho^2\yy=\\
&O\big(L^\frac{1}{3}\|g\|_{\Xi_0}^2\big)+\frac{1}{2}\|\ub g_y\rho\|_{L^2_y(x=0)}^2+\jfxy hg_y\rho^2\dd x\dd y.
\end{aligned}
\end{align}
Take the supremum of $0\leqslant x_0\leqslant L$, notice that $L$ small enough,
\begin{align}\label{g es}
\begin{aligned}
\sup_{0\leqslant x_0\leqslant L}\|\ub g_y\rho\fsx^2+\|\sqrt{\ub}g_{yy}\rho\fs^2+\z\ub_yg_y,g_y\rho^2\yy\lesssim\|\ub g_y\rho\|^2_{L^2_y(x=0)}+\| h\rho\fs^2.
\end{aligned}
\end{align}

The inequality (\ref{g est2}) is similar to the (\ref{g est1}). Differential equation (\ref{g}) respect to $x$,
\begin{align}\label{g x}
\begin{aligned}
&\p_{x}[\ub^2 g_{xy}]- \p^3_{y}[\ub g_x]+\vb\p^2_{y}(\ub g_x)-\ub\vb_{yy}g_x+\p_{x}[2\ub\ub_x g_{y}]- \p^3_{y}[\ub_x g]\\
&+\vb_x\p^2_{y}(\ub g)+\vb\p^2_{y}(\ub_x g)-\ub_x\vb_{yy}g-\ub\vb_{xyy}g =  h_x.
\end{aligned}
\end{align}
Take $g_{xy}\frac{\rho^2}{\z y\y^2}$ as the test function, like (\ref{g what?}),
\begin{align}\label{g x1}
\begin{aligned}
 &\jfxy[\p_{x}[\ub^2 g_{xy}]- \p^3_{y}[\ub g_x]+\vb\p^2_{y}(\ub g_x)-\ub\vb_{yy}g_x] g_{xy}\frac{\rho^2}{\z y\y^2} \dd x\dd y\\
=&\frac{1}{2}(\|\ub g_{xy}\frac{\rho}{\z y\y}\fsx^2-\|\ub g_{xy}\frac{\rho}{\z y\y}\|^2_{L^2_y(x=0)})+\|\sqrt{\ub}g_{xyy}\frac{\rho}{\z y\y}\fs^2\\
&+\z\ub_yg_{xy},g_{xy}\frac{\rho}{\z y\y}\yy+O\big(L^\frac{1}{3}\|g\|_{\Xi_1}^2\big).
\end{aligned}
\end{align}
And
\begin{align}\label{g x2}
\begin{aligned}
  &\jfxy[\p_{x}(2\ub\ub_x g_{y})+\vb_x\p^2_{y}(\ub g)+\vb\p^2_{y}(\ub_x g)-\ub_x\vb_{yy}g-\ub\vb_{xyy}g]g_{xy}\frac{\rho^2}{\z y\y^2}\dd x\dd y\\
 =&O\big(\|\sqrt{\ub} g_{yy}\rho\fs^2+\|g_y\rho\fs^2+\|g_{xy}\frac{\rho}{\z y\y}\fs^2\big)\\
 =&O\big(\|g\|_{\Xi_0}^2+L^\frac{1}{3}\|g\|_{\Xi_1}^2\big).
\end{aligned}
\end{align}
The difficult term is
\begin{align}\label{g dif}
\begin{aligned}
\jfxy- \p^3_{y}[\ub_x g]g_{xy}\frac{\rho^2}{\z y\y^2}\dd x\dd y=&-\jfxy(\ub_xg_{yyy}+3\ub_{xy}g_{yy})g_{xy}\frac{\rho^2}{\z y\y^2}\dd x\dd y\\
&+O\big(\|g_y\rho\fs^2+\|g_{xy}\frac{\rho}{\z y\y}\fs^2\big)\\
                                                     =&O\big((\|\ub g_{yyy}\frac{\rho}{\z y\y}\fs+\|g_{yy}\frac{\rho}{\z y\y}\fs)\|g_{xy}\frac{\rho}{\z y\y}\fs\\
                                                     &+\|g_y\rho\fs^2+\|g_{xy}\frac{\rho}{\z y\y}\fs^2 \big).
\end{aligned}
\end{align}
From equation (\ref{phi}), we have
\[
\begin{split}
\|\phi_{yyy}\frac{\rho}{\z y\y}\fs^2=O\big(\|g\|_{\Xi_0}^2+L^\frac{1}{3}\|g\|_{\Xi_1}^2+\|h\frac{\rho}{\z y\y}\fs^2\big),
\end{split}
\]
notice that the fact
\[
\begin{split}
\|\chi\p^2_y(\frac{\phi}{y})\fs=O\big(\|\phi_{yyy}\fs+\|\phi_{yy}\fs\big),
\end{split}
\]
similarly, we can get
\[
\begin{split}
\|\chi g_{yy}\fs=O\big(\|\phi_{yyy}\fs+\|\phi_{yy}\fs\big),
\end{split}
\]
so we have
\[
\begin{split}
\|g_{yy}\frac{\rho}{\z y\y}\fs^2=&\|g_{yy}\chi\fs^2+\|g_{yy}(1-\chi)\frac{\rho}{\z y\y}\fs^2\\
            =&O\big(\|\phi_{yyy}\fs^2+\|\phi_{yy}\fs^2+\|\sqrt{\ub}g_{yy}\frac{\rho}{\z y\y}\fs^2\big),\\
            =&O\big(\|g\|_{\Xi_0}^2+L^\frac{1}{3}\|g\|_{\Xi_1}^2+\|h\frac{\rho}{\z y\y}\fs^2\big),
\end{split}
\]
and
\[
\begin{split}
\|\ub g_{yyy}\frac{\rho}{\z y\y}\fs^2=&\|(\phi_{yyy}-3\ub_y g_{yy}-3\ub_{yy}g_{y}-\ub_{yyy}g)\frac{\rho}{\z y\y}\fs^2\\
                                    =&O\big(\|g\|_{\Xi_0}^2+L^\frac{1}{3}\|g\|_{\Xi_1}^2+\|h\frac{\rho}{\z y\y}\fs^2\big).
\end{split}
\]
We conclude (\ref{g dif}) as
\begin{align}\label{g x3}
\begin{aligned}
\jfxy- \p^3_{y}[\ub_x g]g_{xy}\frac{\rho^2}{\z y\y^2}\dd x\dd y=O\big(\|g\|_{\Xi_0}^2+L^\frac{1}{3}\|g\|_{\Xi_1}^2+\|h\frac{\rho}{\z y\y}\fs^2\big).
\end{aligned}
\end{align}
Collect (\ref{g x1}), (\ref{g x2}), (\ref{g x3}), we have
\begin{align}\label{gx what?}
\begin{aligned}
&\frac{1}{2}\|\ub g_{xy}\frac{\rho}{\z y\y}\fsx^2+\|\sqrt{\ub}g_{xyy}\frac{\rho}{\z y\y}\fs^2+\z\ub_yg_{xy},g_{xy}\frac{\rho}{\z y\y}\yy\\
=&\frac{1}{2}\|\ub g_{xy}\frac{\rho}{\z y\y}\|^2_{L^2_y(x=0)}+O\big(\|g\|_{\Xi_0}^2+L^\frac{1}{3}\|g\|_{\Xi_1}^2+\|h_x\frac{\rho}{\z y\y}\fs^2+\|h\frac{\rho}{\z y\y}\fs^2\big).
\end{aligned}
\end{align}
So we get the inequalities  (\ref{g est1}) and (\ref{g est2}). These inequalities show if $g$ satisfies the linear parabolic type equation (\ref{g}), then $\|g\|_{\Xi_0}$ and $\|g\|_{\Xi_1}$ can be dominated by its initial data and $h$, we can use the standard method to prove the local existence of solution. Follow this way, we can also get the high order derivatives estimates to show the smoothness of solution.
\end{proof}
In fact, the system (\ref{g}) admits a smooth solution $g$ even if $L$ is large, since the local well-posedness means the global well-posedness for linear parabolic type equation.

The second order Euler profile $[u^2_e,v^2_e,p^2_e]$ solves the linearized Euler equations around $[u^0_e,v^0_e]$ with the force terms:
\begin{align} \label{euler2}
\left\{
\begin{aligned}
&u^0_eu^2_{eX}+u^0_{eX}u^2_e+v^0_eu^2_{eY}+u^0_{eY}v^2_e+p^2_{eX}=F^{(2)},\\
&u^0_ev^2_{eX}+v^0_{eX}u^2_e+v^0_ev^2_{eY}+v^0_{eY}v^2_e+p^2_{eY}=G^{(2)},\\
&\p_X u^2_e + \p_Y v^2_e = 0,\\
&v^2_e|_{Y=0}=-v^1_b|_{y=0},\\
\end{aligned}
\right.
\end{align}
where
\begin{equation}
\begin{aligned}
&F^{(2)}=-(u^{1}_eu^1_{ex}+v^{1}_eu^1_{eY})+\Delta u^{0}_e,\\
&G^{(2)}=-(u^{1}_ev^1_{ex}+v^{1}_ev^1_{eY})+\Delta v^{0}_e.
\end{aligned}
\end{equation}
We can treat above equations as that of the first order Euler flow.
\begin{align} \label{stream-euler2}
\begin{aligned}
     \p_\theta[\Delta_{XY} \psi^2-F_e'(\psi)\psi^2-\frac{F''_e(\psi)}{2}(\psi^1)^2]=\frac{\Delta^2_{XY}\psi}{u^0_e}.
\end{aligned}
\end{align}
Let $$H(\theta,\psi)=\int_0^\theta \frac{\Delta^2_{XY}\psi(\theta',Y(\theta',\psi))}{u_e^0(\theta',\psi)}\dd\theta'.$$ Notice that $\psi\sim Y$ when $Y\rightarrow\infty$, we have that $H$ is of fast decay as $\psi\rightarrow\infty$ because of (\ref{Eul profile}).
We can find a solution of the following equations
\begin{align} \label{stream-euler2-bry}
\left\{
\begin{aligned}
     &\Delta_{XY} \psi^2-F_e'(\psi)\psi^2-\frac{F''_e(\psi)}{2}(\psi^1)^2=H(\theta(X,Y),\psi(X,Y)),\\
     &\psi^2|_{X=0}=\psi^2_0(Y),\hspace{3 mm}\psi^2|_{X=L}=\psi^2_L(Y),\\
     &\psi^2|_{Y=0}=\int_0^Xv^1_b(X',0)\dd X',\hspace{3 mm}\psi^2|_{Y\rightarrow\infty}=0,
\end{aligned}
\right.
\end{align}
with suitable $\psi^2_0(Y)$, $\psi^2_L(Y)$, and we have the estimate of the second order Euler flow
\begin{align}
\|\z Y\y^M\nabla^k\psi^2\|\lesssim1,\hspace{1mm}\text{  for  } 1\leqslant k \leqslant K,\quad K \text{ and } M \text{ large comstants }.
\end{align}

The second order boundary layer profile $[u^2_b,v^2_b,p^2_b]$ is similar to the first, we need to solve the following equations
\begin{align} \label{prandtl2 bry}
\left
\{
\begin{aligned}
& u^0_p \p_x u^2_b + u^2_b \p_x u^0_p + v^0_p \p_y u^2_b+  [v^2_b - v^2_b|_{y = 0}]\p_y u^0_p- \p_{yy} u^2_b + \p_x p^2_b = f^{(2)}, \\
& \p_y p^2_b = g^{(2)},\\
& \p_x u^2_b + \p_y v^2_b = 0,\\
& u^2_b|_{x = 0}=U^2_B, \hspace{5 mm} u^2_b|_{y = 0} = -u^2_e|_{Y = 0}, \hspace{5 mm} [u^2_b, v^2_b]|_{y \rightarrow \infty} = 0.
\end{aligned}
\right.
\end{align}
Where
\begin{equation}
\begin{aligned}
f^{(2)}=&-\{u^0_bu^{(2)}_{ex}+u^0_{bx}u^{(2)}_e+v^0_bu^{(2)}_{ey}+u^0_{by}v^{(2)}_e\\
        &+u^{1}_pu^1_{bx}+u^{1}_bu^{(1)}_{ex}+v^{1}_pu^1_{by}+v^{1}_bu^{(1)}_{ey}-u^{0}_{bxx}\},\\
g^{(2)}=&-\{u^{0}_bv^0_{px}+u^{(0)}_ev^0_{bx}+v^{0}_bv^0_{py}+(v^{(0)}_e+v^1_e|_{Y=0})v^0_{by}\\
        &-v^{0}_{byy}\}.\\
\end{aligned}
\end{equation}
We can see $f^{(2)}$ and $g^{(2)}$ decays fast when $y\rightarrow\infty$ from Lemma \ref{pra0} and Lemma \ref{pra1}. We can solve $p^2_b(x,y)=-\int_y^\infty g^{(2)}(x,y')\dd y'$. By using the same argument of Lemma \ref{pra1}, we have
\begin{align}
\| \z y\y^M\nabla^ku^2_b\|_\infty+\| \z y\y^M\nabla^k v^2_b \|_\infty \lesssim 1 \hspace{2mm}\text{  for  }\hspace{2mm} 0\leqslant k \leqslant K,
\end{align}
where $K$ and $M$ are large constants.

After that, $p_b^{3}$ is solved by
\begin{equation}\label{p3}
\begin{aligned}
p_b^{3}=&\int_y^\infty\{\sum_{j=0}^{1}[u^{1-j}_bv^j_{px}+u^{(1-j)}_ev^j_{bx}+v^{1-j}_bv^j_{py}\\
          &+(v^{(1-j)}_e+v^{2-j}_e|_{Y=0})v^j_{by}]-v^{1}_{byy}\}\dd y'.
\end{aligned}
\end{equation}
We can conclude the following proposition for the approximate profiles.
\begin{proposition}\label{construct} Under the assumptions of Theorem \ref{main} then equations (\ref{euler1}), (\ref{prandtl1}), (\ref{euler2}), (\ref{prandtl2 bry}) admit smooth solutions $[u^j_e,v^j_e]$, $[u^j_b,v^j_b]$ for $j=1,2$, and  the following estimates hold
\begin{align}
\begin{aligned}
&\| \z y\y^M\nabla^k u^j_b\|_\infty+\| \z y\y^M\nabla^k v^j_b\|_\infty \lesssim 1\hspace{2mm}\text{  for  }\hspace{2mm}  0\leqslant k \leqslant K,j=0,1,2, \\
&\| \z Y\y^M\nabla^ku^j_e\|_\infty+\| \z Y\y^M\nabla^kv^j_e\|_\infty  \lesssim 1 \hspace{2mm}\text{  for  }\hspace{2mm}  0\leqslant k \leqslant K,  j=1,2,
\end{aligned}
\end{align}
\noindent where $K,$ $M$ sufficiently large constants, $\z y\y=y+1$ and $\z Y\y=Y+1$.
\end{proposition}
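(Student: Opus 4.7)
The construction is carried out in the natural hierarchical order, each profile feeding into the source/force terms of the next. The plan is to apply, in sequence, the two core auxiliary results already established: Lemma~\ref{LEuler} for the Euler-type elliptic problems and Lemma~\ref{pra1} (with its obvious second-order analogue) for the Prandtl-type parabolic problems. The bulk of the work is not the solvability itself, but rather the verification that at each stage the forcing and boundary data are smooth and satisfy the prescribed weighted decay, so that the next stage's hypotheses are met.

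First I would construct $[u^1_e, v^1_e]$. Using the stream function $\psi^1$ and the reformulation (\ref{stream-euler1-bry}), the boundary datum on $\{Y=0\}$ is $\int_0^X v^0_b(X',0)\dd X'$, which is smooth by Lemma~\ref{pra0}. After choosing $\psi^1_0, \psi^1_L$ satisfying the compatibility conditions at the corners (for instance, by extending and cutting off $\int_0^X v^0_b(X',0)\dd X'$ appropriately near the corners and then adjusting), Lemma~\ref{LEuler} directly yields $\psi^1$ with $\|\z Y\y^M \nabla^k \psi^1\|_\infty \lesssim 1$ for $1\leqslant k\leqslant K$, from which $u^1_e=\psi^1_Y$, $v^1_e=-\psi^1_X$ inherit the weighted estimate. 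Next, with $[u^1_e, v^1_e]$ in hand, the forcing $f^{(1)}$ in (\ref{prandtl1 bry}) is a product of $u^0_b, v^0_b$ (which decay fast in $y$ by Lemma~\ref{pra0}) with smooth polynomial-in-$y$ coefficients coming from $u^{(1)}_e, v^{(1)}_e$; hence $f^{(1)}$ and its derivatives decay rapidly. Applying Lemma~\ref{pra1} then produces $[u^1_b, v^1_b]$ with the required weighted bounds.

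I would then pass to the second order Euler profile. The source terms $F^{(2)}, G^{(2)}$ of (\ref{euler2}) involve $[u^1_e, v^1_e]$, $\Delta [u^0_e, v^0_e]$, all of which are bounded with weighted derivative decay by (\ref{Eul profile}) and the first step. Rewriting the system via the stream function $\psi^2$, assumption (\ref{Eul profile}) ensures that $u^0_e$ is bounded below by $c_0$ so the change of variables $(X,Y)\mapsto(\theta,\psi)$ is a smooth diffeomorphism, and $H(\theta,\psi)=\int_0^\theta \frac{\Delta^2 \psi}{u^0_e}\dd\theta'$ inherits fast decay in $\psi$ (equivalently in $Y$) from the weighted bounds on $\nabla^k u^0_e$. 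The extra nonlinear-in-$\psi^1$ term $\tfrac{1}{2}F_e''(\psi)(\psi^1)^2$ is smooth and decaying by the previous step. So (\ref{stream-euler2-bry}) can be homogenized just as in Lemma~\ref{LEuler}, yielding a smooth solution $\psi^2$ with $\|\z Y\y^M\nabla^k\psi^2\|_\infty\lesssim 1$. From this $[u^2_e, v^2_e]$ is recovered. Then $f^{(2)}, g^{(2)}$ in (\ref{prandtl2 bry}) are again products of the already-controlled boundary layer profiles (which decay fast in $y$) with smooth Taylor polynomials in $y$ of Euler profiles, so the solvability of (\ref{prandtl2 bry}) follows by the verbatim argument of Lemma~\ref{pra1}, after first solving $p^2_b(x,y)=-\int_y^\infty g^{(2)}\dd y'$ which is well defined by the fast decay of $g^{(2)}$. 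Finally $p^3_b$ is defined directly by the integral (\ref{p3}) and inherits decay from its integrand.

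The only genuinely delicate point is the second order Euler step: a priori $H(\theta,\psi)$ is only defined where $(\theta,\psi)$ lies in the image of the diffeomorphism, and one must check that $H$ and its derivatives in the $(X,Y)$ variables behave well as $Y\to\infty$. This is where hypothesis (\ref{Eul profile}) on $\|\z Y\y^k\nabla^m u^0_e\|_\infty$ with $k$ large is used crucially: it guarantees that $\Delta^2\psi/u^0_e$ decays rapidly in $Y$, and since $\psi\sim Y$ at infinity, $H$ decays rapidly in $\psi$. Once this is verified, the remainder of the argument is a straightforward iteration of the two master lemmas, and all weighted derivative estimates follow by induction on the order of differentiation in exactly the manner indicated in the proofs of Lemma~\ref{LEuler} and Lemma~\ref{pra1}.
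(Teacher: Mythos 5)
Your proposal correctly reproduces the paper's construction: the Appendix builds the profiles in exactly this hierarchical order, invoking Lemma~\ref{LEuler} for each Euler correction, Lemma~\ref{pra1} (and the same argument at second order) for each boundary-layer correction, and verifying at each stage that the forcing inherits weighted decay from the previously constructed profiles. Your identification of the decay of $H(\theta,\psi)$ as the delicate point at the second-order Euler stage also matches the paper's remark following (\ref{stream-euler2}).
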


Notices that $v^2_b|_{y=0}\neq0$.  We need to match the boundary conditions at $y=0$, and also   $v^2_b|_{y \rightarrow\infty}=0$. Then we can modify $[\hat{u}^2_b,\hat{v}^2_b]$ in this way:
\begin{equation}\label{modify}
\begin{aligned}
&\hat{u}^2_b(x,y):=\chi(\sqrt\e y)u^2_b(x,y)-\sqrt\e\chi'(\sqrt\e y)\int_0^y u^2_b(x,y')\dd y',\\
&\hat{v}^2_b(x,y):=\chi(\sqrt\e y)(v^2_b(x,y)-v^2_b(x,0)),
\end{aligned}
\end{equation}
where $\chi$ is a cut-off function satisfying $\chi|_{[0,1]}=1$ and $\chi|_{[2,\infty)}=0$.

And let $[U_s,V_s,P_s]$ be
\begin{equation}
\begin{aligned}
U_s(X,Y)=&u^0_e(X,Y)+u^0_b(X,\frac{Y}{\se})+\se[u^1_e(X,Y)+u^1_b(X,\frac{Y}{\se})]\\
          &+\e[u^2_e(X,Y)+\hat{u}^2_b(X,\frac{Y}{\se})],\\
V_s(X,Y)=&v^0_e(X,Y)+\se[v^0_b(X,\frac{Y}{\se})+v^1_e(X,Y)]+\e[v^1_b(X,\frac{Y}{\se})+v^2_e(X,Y)]\\
          &+\e^\frac{3}{2}\hat{v}^2_b(X,\frac{Y}{\se}),\\
P_s(X,Y)=&p^0_e(X,Y)+p^0_b(X,\frac{Y}{\se})+\se[p^1_e(X,Y)+p^1_b(X,\frac{Y}{\se})]\\
          &+\e[p^2_e(X,Y)+p^2_b(X,\frac{Y}{\se})]+\e^\frac{3}{2}p^3_b(X,\frac{Y}{\se}).
\end{aligned}
\end{equation}
Then the errors
\begin{equation}\label{R1,R2}
\begin{aligned}
&R_1:=U_sU_{sX}+V_sU_{sY}-\e\Delta U_s+P_{sX},\\
&R_2:=U_sV_{sX}+V_sV_{sY}-\e\Delta V_s+P_{sY},
\end{aligned}
\end{equation}
satisfy
\begin{equation}
\begin{aligned}\label{R order}
\|R_1\|+\|R_2\|\lesssim\e^{\frac{3}{2}}.
\end{aligned}
\end{equation}

\noindent \textbf{Acknowledgements:} L. Zhang is partially supported by NSFC under grant
11471320 and 11631008.

\bibliographystyle{springer}
\bibliography{mrabbrev,literatur}
\newcommand{\noopsort}[1]{} \newcommand{\printfirst}[2]{#1}
\newcommand{\singleletter}[1]{#1} \newcommand{\switchargs}[2]{#2#1}

\end{document}